\providecommand{\U}[1]{\protect\rule{.1in}{.1in}}
\newtheorem{theorem}{Theorem}[section]
\newtheorem*{acknowledgement*}{Acknowledgement}
\newtheorem{example}[theorem]{Example}
\newtheorem{lemma}[theorem]{Lemma}
\newtheorem{proposition}[theorem]{Proposition}
\newtheorem{remark}[theorem]{Remark}
\newtheorem{thm}{Theorem}
\begin{document}
\title[Translating solitons of the Mean Curvature Flow]{Rigidity results and topology at infinity of translating solitons of the Mean Curvature Flow}

\subjclass[2010]{53C42, 53C44, 53C21}
\keywords{translators of the mean curvature flow, $f$-minimal hypersurfaces, weighted manifolds, rigidity theorems, stability, topology at infinity}

\date{\today}

\author {Debora Impera}
\address{Universit\`a degli Studi di Milano Bicocca\\
Dipartimento di Matematica e Applicazioni\\
via Cozzi 55\\
I-20125 Milano, ITALY}
\email{debora.impera@gmail.com}
\author {Michele Rimoldi}
\address{Universit\'e Paris 13\\
Sorbonne Paris Cit\'e\\
LAGA\\
CNRS (UMR 7539)\\
99, Avenue Jean-Baptiste Cl\'ement\\
F-93430 Villetaneuse, FRANCE}
\email{michele.rimoldi@gmail.com}

\begin{abstract}
In this paper we obtain rigidity results and obstructions on the topology at infinity of translating solitons of the mean curvature flow in the Euclidean space. Our approach relies on the theory of $f$--minimal hypersurfaces.
\end{abstract}

\maketitle

\tableofcontents

\section{Introduction}\label{intro}
By a translating soliton of the mean curvature flow (\textsl{translator} for short) we mean a connected isometrically immersed complete hypersurface $x:\Sigma^m\to\mathbb{R}^{m+1}$ whose mean curvature vector field $\mathbf{H}$ satisfies the equation
\begin{equation}\label{TranslEq}
\mathbf{H}=v^{\bot}
\end{equation}
for some fixed unit length vector $v\in\mathbb{R}^{m+1}$, where $(\cdot)^{\bot}$ denotes the projection on the normal bundle of $\Sigma$. Note that here we are using the convention
\[
 \mathbf{H}=H\nu=\mathrm{tr}(\mathbf{A}),
\]
where $\nu$ is the unit normal vector of $\Sigma$, $H$ is the mean curvature function, and the second fundamental form of the immersion is defined as the generalized Hessian
\[
 \mathbf{A}=Ddx.
\]
When possible, we will choose the unit normal $\nu$ to be inward pointing, and we will agree to say that $\Sigma$ is mean convex if it holds that $H\geq0$.

Solutions of \eqref{TranslEq} correspond to translating solutions $\{\Sigma_t=\Sigma+tv\}_{t\in\mathbb{R}}$ of the mean curvature flow, and play a key role in the study of slowly forming singularities; see e.g. \cite{HS1}, \cite{HS2}. Without loss of generality we can assume that the velocity vector $v$ is given by $v=e_{m+1}$, where $\{e_1,\ldots,e_{m+1}\}$ is the standard orthonormal basis of $\mathbb{R}^{m+1}$.
\medskip
As a first basic example, note that if a translator has zero mean curvature, by \eqref{TranslEq} we have that $v$ must be tangential to the translator. Consequently, translators which are also minimal hypersurfaces split as the product $\tilde{\Sigma}\times L$, where $L$ is a line parallel to $v$ and $\tilde{\Sigma}$ is a minimal hypersurface in $L^{\bot}$.

Here are some other typical examples of translators. 

\begin{example}(Grim reaper and grim reaper cylinder)
 \rm{In the case of $\gamma:I\to\mathbb{R}^2$, smooth curve parametrized by its arc--length, \eqref{TranslEq} becomes
 \[
  \ddot{\gamma}=\left\langle v, R\dot{\gamma}\right\rangle R\dot{\gamma},
 \]
 where $R:\mathbb{R}^2\to\mathbb{R}^2$ is the counterclockwise rotation of $\frac{\pi}{2}$. By integration of this ODE, one can prove that the only possible translating curve with non--identically zero curvature is given by the graph of $y=-\log(\cos(x))$, where $x\in(-\frac{\pi}{2}, \frac{\pi}{2})$. This curve is called the \textsl{grim reaper}, \cite{Grayson}.

Taking the orthogonal product of a grim reaper with $\mathbb{R}^{m-1}$, one easily obtains another (higher dimensional) example of a translator in $\mathbb{R}^{m+1}$, the \textsl{grim reaper cylinder}. It can be easily proved that these examples are mean convex. Indeed, they have only one strictly positive principal curvature.}

\end{example}

\begin{example}\label{BowlTC}(Bowl soliton and the translating catenoid)
\rm{ In \cite{AW} it is proved that there exist a unique (up to rigid motion) solution of \eqref{TranslEq} which is rotationally symmetric and strictly convex. When $m=1$ this is the grim reaper. When $m>1$, the solution (which is actually an entire graph growing quadratically at infinity) roughly looks like a paraboloid, and is usually called the \textsl{bowl soliton}.

In \cite{XJWang}, X.--J. Wang proved that in dimension $m=2$ any entire convex translator must be rotationally symmetric, and hence the bowl soliton. However, in \cite{XJWang} it is also shown that for $m\geq 3$ there exist entire strictly convex translators that are \textsl{not rotationally symmetric}. See also the very recent \cite{Has} by R. Haslhofer, where the uniqueness of the bowl soliton is proved in arbitrary dimension under somewhat different assumptions.

Without assuming convexity, it was proved by J. Clutterbuck, O. Schn\"urer and F. Schulze, \cite{CSS}, that rotationally symmetric translators coincide (up to rigid motion) either with the bowl soliton or with the \textsl{translating catenoid}, a complete non--convex translator made up of the union of two graphical ``winglike'' solutions in the complement of a ball that are asymptotic at infinity to a bowl soliton.}
\end{example}

\begin{example}
\rm{By the desingularization technique, X.--H. Nguyen, \cite{Ng1, Ng2}, exhibited new examples of translators. In particular, note that in these papers $2$--dimensional translators with infinite genus are constructed.}
\end{example}

As we shall see in a moment, translators of the mean curvature flow turn out to belong to the more general class of $f$--minimal hypersurfaces. This class of hypersurfaces has been extensively studied in recent years due to the fact that, besides minimal hypersurfaces, also self--shrinkers of the mean curvature flow pertain to it; see for instance \cite{ChMeZh1}, \cite{Esp}, \cite{Fan}, \cite{Ho}, \cite{IR}, \cite{Liu}. On the other hand, the general theory developed for $f$--minimal hypersurfaces has not been exploited yet for the study of translators. The aim of this paper is to highlight how the realm of weighted manifolds and $f$--minimal hypersurfaces can naturally give strong enough characterization and topological results for translators. 

\medskip

Recall that a weighted manifold is a triple $M_f^{m+1}=\left(M^{m+1}, \left\langle\,,\,\right\rangle, e^{-f}d\mathrm{vol}_{M}\right)$ , where $\left(M^{m+1}, \left\langle\,,\,\right\rangle\right)$ is a complete $(m+1)$-dimensional Riemannian manifold, $f\in C^{\infty}(M)$, and $d\mathrm{vol}_{M}$ denotes the canonical Riemannian volume measure on $M$. Following Gromov, \cite{Gro}, if we consider an isometrically immersed orientable hypersurface $\Sigma^m$  in the weighted manifold $M_f$, the weighted mean curvature vector field is defined as
\[
 \mathbf{H}_f=\mathbf{H}+(\overline{\nabla} f)^\bot.
\]
Here we have denoted by $\overline{\nabla}$ the Levi--Civita connection on $M$. From variatonal formulae, \cite{Bay}, one can see that $\Sigma$ is $f$--minimal, namely a critical point of the weighted area functional
\[
 \mathrm{vol}_f(\Sigma)=\int_{\Sigma}e^{-f}d\mathrm{vol}_{\Sigma},
\]
if and only if $\mathbf{H}_{f}$ vanishes identically.

Let $x:\Sigma^m\to\mathbb{R}^{m+1}$ be  a translator satisfying \eqref{TranslEq} for some $v\in\mathbb{R}^{m+1}$. Then, letting $f=-\left\langle v, x\right\rangle$, it is easy to see that $\Sigma$ is a $f$--minimal hypersurfaces in $\mathbb{R}^{m+1}$. Moreover, looking at the Bakry--\'Emery Ricci tensor of $\Sigma$, we have that
\begin{equation}\label{CurvBnd}
 \mathrm{Ric}_f(X,X)=\mathrm{Ric}(X, X)+\mathrm{Hess}(f)(X, X)=-\left\langle A^2 X, X\right\rangle,
\end{equation}
for any $X\in T\Sigma$; see e.g. Section 7 in \cite{IR}.

Computing the second variation formula for the weighted area functional, it can be seen that stability properties of $f$--minimal hypersurfaces are related to spectral properties of the weighted Jacobi operator
\[
 L_f=-\Delta_f-\left(|\mathbf{A}|^2+\overline{\mathrm{Ric}}_f(\nu, \nu)\right),
\]
where $\overline{\mathrm{Ric}}_f$ denotes the Bakry--\'Emery Ricci tensor of the ambient space and $\Delta_f=\Delta-\left\langle\nabla f, \,\nabla\cdot\right\rangle$ is the $f$--Laplacian operator on $\Sigma_f$. In particular, we say that a $f$--minimal hypersurface $\Sigma$ is $f$--stable if for any compactly supported smooth function $u\in C_{c}^{\infty}(\Sigma)$, it holds that
\[
 \int_{\Sigma}uL_{f}u\,e^{-f}d\mathrm{vol}_{\Sigma}\geq 0.
\]
The $f$--index of $\Sigma$ is the generalized Morse index of $L_f$ on $\Sigma$. Namely, given a bounded domain $\Omega\subset\Sigma$, we define
\[
 Ind^{L_f}(\Omega)=\#\{\mathrm{negative\,\,eigenvalues\,\,of}\,\, L_f\,\mathrm{on}\,C_{0}^{\infty}\}.
\]
The $f$--index of $\Sigma$ is then defined as
\[
 Ind_{f}(\Sigma):=Ind^{L_f}(\Sigma)=\sup_{\Omega\subset\subset\Sigma}Ind^{L_f}(\Omega).
\]

In the  special case of $x:\Sigma^m\to\mathbb{R}^{m+1}$ translator of the mean curvature flow, the $f$--stability operator takes the form
\[
 L_f=-\Delta_f-|\mathbf{A}|^2.
\]
It is not difficult to prove that translator hyperplanes (i.e. hyperplanes containing the direction $v$) are $f$--stable.
Moreover, it has been proved by C. Arezzo and J. Sun, \cite{AS}, but also by L. Ma, using different techniques, \cite{LMa}, that the grim reaper is a $f$--stable translator in $\mathbb{R}^2$. Similar computations show that also the grim reaper cylinders are $f$--stable. More generally, note that L. Shahriyari has proved that translating graphs are $f$--stable, \cite{Sh}.

The first main result of the paper is the following rigidity theorem for $f$--stable translators under a weighted $L^2$--condition on the norm of the second fundamental form.

\begin{thm}\label{MainA}
Let $x:\Sigma^{m\geq 2}\to\mathbb{R}^{m+1}$ be a $f$--stable translator of the mean curvature flow. Assume that $|\mathbf{A}|\in L^2(\Sigma_f)$. Then $\Sigma$ is a translator hyperplane.
\end{thm}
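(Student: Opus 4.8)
The plan is to combine the $f$--stability inequality with a Simons--type identity for the second fundamental form of a translator and a refined Kato inequality, along the lines of the classical rigidity theorems for $L^2$--stable minimal hypersurfaces.

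First I would record the Bochner/Simons identity for translators. Using $H=\langle v,\nu\rangle$ (equivalent to \eqref{TranslEq}) one computes $\nabla_iH=-A_{ij}\langle v,e_j\rangle$ and then $\nabla_i\nabla_jH=-\langle v^{\top},\nabla\rangle A_{ij}-H\,A_{ik}A_{kj}$, using that $\overline\nabla v=0$ and the Codazzi equation; inserting this into the general Simons formula in the flat ambient space $\Delta A_{ij}=\nabla_i\nabla_jH+H\,A_{ik}A_{kj}-|\mathbf{A}|^2A_{ij}$, the cubic terms cancel and, since $\Delta_f=\Delta+\langle v^{\top},\nabla\,\cdot\,\rangle$, one is left with the clean identity
\[
\Delta_f\mathbf{A}=-|\mathbf{A}|^2\mathbf{A},\qquad\text{whence}\qquad \tfrac12\Delta_f|\mathbf{A}|^2=|\nabla\mathbf{A}|^2-|\mathbf{A}|^4 .
\]
Since $\nabla\mathbf{A}$ is totally symmetric (Codazzi), the refined Kato inequality $|\nabla\mathbf{A}|^2\geq(1+c_m)|\nabla|\mathbf{A}||^2$ holds with a dimensional constant $c_m>0$ precisely when $m\geq2$ (for curves one only has equality, which is exactly where the hypothesis $m\geq2$ is used). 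Combining, at points where $|\mathbf{A}|>0$,
\[
|\mathbf{A}|\,\Delta_f|\mathbf{A}|\ \geq\ c_m\,|\nabla|\mathbf{A}||^2-|\mathbf{A}|^4 ,
\]
and a standard approximation by $(|\mathbf{A}|^2+\varepsilon^2)^{1/2}$, letting $\varepsilon\to0$ at the end, takes care of the possible non--smoothness of $|\mathbf{A}|$ at its zeros.

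Next I would feed $u=|\mathbf{A}|\eta$, with $\eta\in C_c^{\infty}(\Sigma)$, into the $f$--stability inequality $\int_{\Sigma}|\mathbf{A}|^2u^2e^{-f}\leq\int_{\Sigma}|\nabla u|^2e^{-f}$, expand $|\nabla(|\mathbf{A}|\eta)|^2$, and on the other side multiply the displayed differential inequality by $\eta^2e^{-f}$ and integrate by parts, using that $\Delta_f$ is self--adjoint with respect to $e^{-f}d\mathrm{vol}_\Sigma$. Adding the two relations, the mixed terms $\int_{\Sigma}\eta\,|\mathbf{A}|\,\langle\nabla|\mathbf{A}|,\nabla\eta\rangle e^{-f}$ cancel and one obtains
\[
c_m\int_{\Sigma}\eta^2|\nabla|\mathbf{A}||^2\,e^{-f}d\mathrm{vol}_\Sigma\ \leq\ \int_{\Sigma}|\mathbf{A}|^2|\nabla\eta|^2\,e^{-f}d\mathrm{vol}_\Sigma .
\]
Taking $\eta\equiv1$ on the geodesic ball $B_R$, supported in $B_{2R}$, with $|\nabla\eta|\leq2/R$, the right--hand side is bounded by $\tfrac{4}{R^2}\int_{\Sigma}|\mathbf{A}|^2e^{-f}$, which tends to $0$ as $R\to\infty$ because $|\mathbf{A}|\in L^2(\Sigma_f)$. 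Hence $\nabla|\mathbf{A}|\equiv0$, so by connectedness $|\mathbf{A}|$ is a constant $c\geq0$.

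It remains to exclude $c>0$. In that case $c^2\,\mathrm{vol}_f(\Sigma)=\int_{\Sigma}|\mathbf{A}|^2e^{-f}<\infty$, so $\Sigma$ has finite weighted volume; testing $f$--stability with the same cutoff $\eta$ gives $c^2\int_{\Sigma}\eta^2e^{-f}\leq\int_{\Sigma}|\nabla\eta|^2e^{-f}\leq\tfrac{4}{R^2}\mathrm{vol}_f(\Sigma)\to0$, while the left--hand side converges to $c^2\,\mathrm{vol}_f(\Sigma)>0$, a contradiction. Therefore $\mathbf{A}\equiv0$, $\Sigma$ is totally geodesic, hence an affine hyperplane; and since $\mathbf{H}\equiv0$, equation \eqref{TranslEq} forces $v$ to be tangent to $\Sigma$, i.e. $\Sigma$ is a translator hyperplane. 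The step requiring the most care is the Simons identity together with the correct refined Kato inequality; in particular the fact that the Kato improvement degenerates for $m=1$ is what pins down the dimensional restriction (indeed the grim reaper is an $f$--stable translator with $|\mathbf{A}|\in L^2(\Sigma_f)$ that is not a hyperplane), while everything after that is a routine cutoff argument.
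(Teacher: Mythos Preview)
Your argument hinges on the refined Kato inequality $|\nabla\mathbf{A}|^2\geq(1+c_m)\,|\nabla|\mathbf{A}||^2$ with some $c_m>0$, and this is where it breaks down. That inequality, in the form of Schoen--Simon--Yau, requires the tensor to be \emph{trace--free} in addition to satisfying Codazzi; total symmetry of $\nabla\mathbf{A}$ alone is not enough. For a translator $H=\langle v,\nu\rangle$ is generically nonzero, so $\mathbf{A}$ is not trace--free. A pointwise counterexample in dimension $m=2$: take $\mathbf{A}=\mathrm{diag}(1,0)$ and set $T_{111}=\nabla_1A_{11}=1$ with all other components of the (totally symmetric) tensor $T=\nabla\mathbf{A}$ equal to zero. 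Then $|\nabla\mathbf{A}|^2=1=|\nabla|\mathbf{A}||^2$, so no improvement is possible. With only the ordinary Kato inequality your cutoff computation collapses to the triviality $0\leq\int_{\Sigma}|\mathbf{A}|^2|\nabla\eta|^2e^{-f}$ and yields no information.

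This is exactly why the paper proceeds differently. It uses only the standard Kato inequality to write $|\mathbf{A}|\bigl(\Delta_f+|\mathbf{A}|^2\bigr)|\mathbf{A}|\geq0$ and then appeals to a weighted Liouville--type comparison (Lemma~\ref{Liouville}) with the positive Jacobi solution $\omega$ coming from $f$--stability. This forces either $|\mathbf{A}|\equiv0$ or \emph{equality} in Kato, i.e.\ $|\nabla\mathbf{A}|^2=|\nabla|\mathbf{A}||^2$, together with $|\mathbf{A}|=C_1\omega$. The remaining work---Lemma~\ref{Lemma2} to obtain $|H|>0$ and $|\mathbf{A}|=C_2H$, followed by the Colding--Minicozzi style pointwise analysis of the rank of $\mathbf{A}$, and finally ruling out the grim reaper cylinder via the $L^2$--hypothesis---is precisely what replaces the missing Kato gain. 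If you want to repair your approach, you would need an independent reason that $H$ (hence $\mathrm{tr}\,\mathbf{A}$) is under control before invoking any refined Kato, and at that point you are essentially back to the paper's route.
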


An application of the maximum principle and the weighted version of a result in \cite{FCS} give that translators with mean curvature that does not change sign are either $f$--stable (generalizing in particular Theorem 1.2.5 in \cite{Sh}) or they split as the product of a line parallel to the translating direction and a minimal hypersurface in the orthogonal complement of the line. Note that, in this latter case, by Fubini's theorem, the condition $|\mathbf{A}|\in L^{2}(\Sigma_f)$ is met if and only if $|\mathbf{A}|\equiv 0$ (i.e. $\Sigma$ is a translator hyperplane). Hence the conclusion of Theorem \ref{MainA} still holds in this situation. On the other hand, under this assumption we are able to strengthen the result as follows, improving Theorem 6 in \cite{MaMi}. 

\begin{thm}\label{MainA1}
Let $x:\Sigma^{m\geq 2}\to\mathbb{R}^{m+1}$ be a translator with mean curvature which does not change sign. Suppose that the traceless second fundamental form of the immersion $\mathbf{\Phi}=\mathbf{A}-\frac{H}{m}\mathbf{Id}$ satisfies  $|\mathbf{\Phi}|\in L^2(\Sigma_f)$. Then $\Sigma$ is a translator hyperplane.
\end{thm}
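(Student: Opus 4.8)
The plan is to dichotomize using the alternative recalled just before the statement and, in the genuinely new case, to run a Simons-type argument with the traceless second fundamental form $\mathbf{\Phi}$ in place of $\mathbf{A}$. Since $H$ does not change sign, by the maximum principle and the weighted version of the result of \cite{FCS} mentioned above, either $\Sigma$ splits isometrically as $\tilde{\Sigma}\times L$, with $L$ a line parallel to $v$ and $\tilde{\Sigma}$ minimal in $v^{\bot}$, or $\Sigma$ is $f$--stable. In the splitting case $H\equiv 0$, hence $\mathbf{\Phi}=\mathbf{A}$; writing $x=y+sv$ with $y\in v^{\bot}$, $s\in\mathbb{R}$, one has $f=-\langle v,x\rangle=-s$, and Fubini's theorem gives
\[
\int_{\Sigma}|\mathbf{A}|^{2}\,e^{-f}d\mathrm{vol}_{\Sigma}=\Big(\int_{\mathbb{R}}e^{s}\,ds\Big)\Big(\int_{\tilde{\Sigma}}|\mathbf{A}|^{2}\,d\mathrm{vol}_{\tilde{\Sigma}}\Big),
\]
which is finite only when $|\mathbf{A}|\equiv 0$; then $\tilde{\Sigma}$ is a hyperplane and $\Sigma$ is a translator hyperplane. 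So from now on we may assume $\Sigma$ is $f$--stable.

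I would first record the pointwise identities proper to translators. Since $H=\langle v,\nu\rangle$ and, by \eqref{CurvBnd}, $\mathrm{Hess}(f)=-H\mathbf{A}$, a direct computation gives $L_{f}H=-\Delta_{f}H-|\mathbf{A}|^{2}H=0$; moreover the classical Simons identity together with Codazzi and the flatness of $\mathbb{R}^{m+1}$ collapses---the cubic terms cancelling against those produced by $\mathrm{Hess}(f)$---to $\tfrac{1}{2}\Delta_{f}|\mathbf{A}|^{2}=|\nabla\mathbf{A}|^{2}-|\mathbf{A}|^{4}$. Combining this with $\Delta_{f}H^{2}=2H\Delta_{f}H+2|\nabla H|^{2}=-2|\mathbf{A}|^{2}H^{2}+2|\nabla H|^{2}$ and with the algebraic relations $|\mathbf{A}|^{2}=|\mathbf{\Phi}|^{2}+\tfrac{H^{2}}{m}$, $|\nabla\mathbf{A}|^{2}=|\nabla\mathbf{\Phi}|^{2}+\tfrac{|\nabla H|^{2}}{m}$, one obtains the weighted Simons identity for the traceless part,
\[
\tfrac{1}{2}\Delta_{f}|\mathbf{\Phi}|^{2}=|\nabla\mathbf{\Phi}|^{2}-|\mathbf{A}|^{2}|\mathbf{\Phi}|^{2}=|\nabla\mathbf{\Phi}|^{2}-|\mathbf{\Phi}|^{4}-\tfrac{H^{2}}{m}|\mathbf{\Phi}|^{2}.
\]

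Next I would test the $f$--stability inequality $\int_{\Sigma}|\nabla u|^{2}e^{-f}\geq\int_{\Sigma}|\mathbf{A}|^{2}u^{2}e^{-f}$ with $u=|\mathbf{\Phi}|\varphi$, $\varphi\in C_{c}^{\infty}(\Sigma)$ (regularizing $|\mathbf{\Phi}|$ by $\sqrt{|\mathbf{\Phi}|^{2}+\varepsilon}$ and letting $\varepsilon\to 0$ to cope with the zero set). Integrating by parts and substituting the Simons identity for $|\mathbf{\Phi}|^{2}$, the two terms $|\mathbf{\Phi}|^{4}$ and $\tfrac{H^{2}}{m}|\mathbf{\Phi}|^{2}$ appear on both sides with equal coefficient and cancel---this is exactly where it is essential that the stability operator of a translator is $-\Delta_{f}-|\mathbf{A}|^{2}$, with no zeroth order correction---leaving
\[
\int_{\Sigma}\big(|\nabla\mathbf{\Phi}|^{2}-|\nabla|\mathbf{\Phi}||^{2}\big)\varphi^{2}e^{-f}\leq\int_{\Sigma}|\mathbf{\Phi}|^{2}|\nabla\varphi|^{2}e^{-f}.
\]
Choosing $\varphi\equiv 1$ on the geodesic ball $B_{R}(p)$, $\varphi\equiv 0$ off $B_{2R}(p)$, with $|\nabla\varphi|\leq 2/R$, the right-hand side is at most $\tfrac{4}{R^{2}}\int_{\Sigma}|\mathbf{\Phi}|^{2}e^{-f}d\mathrm{vol}_{\Sigma}\to 0$ since $|\mathbf{\Phi}|\in L^{2}(\Sigma_{f})$; hence, by monotone convergence and a refined Kato inequality $|\nabla\mathbf{\Phi}|^{2}\geq(1+\delta_{m})|\nabla|\mathbf{\Phi}||^{2}$ with $\delta_{m}>0$ ($m\geq 2$), first $\nabla|\mathbf{\Phi}|\equiv 0$ and then, re-inserting this, $\nabla\mathbf{\Phi}\equiv 0$. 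Feeding $\nabla\mathbf{\Phi}\equiv 0$ into the Simons identity forces $|\mathbf{\Phi}|^{4}+\tfrac{H^{2}}{m}|\mathbf{\Phi}|^{2}\equiv 0$, hence $\mathbf{\Phi}\equiv 0$ and $\Sigma$ is totally umbilic. For $m\geq 2$ Codazzi then makes $H$ constant, and a round sphere cannot satisfy $H=\langle v,\nu\rangle$, so $H\equiv 0$: $\Sigma$ is a hyperplane, necessarily parallel to $v$ since it is a translator, i.e.\ a translator hyperplane.

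The main obstacle I foresee is the last analytic ingredient: a refined Kato inequality for $\mathbf{\Phi}$ sharp enough for this argument. The difficulty is that on a translator the mean curvature is not constant, so $\nabla\mathbf{\Phi}$ is only Codazzi modulo trace terms built from $\nabla H$; one must either show these terms are harmless or invoke a Kato inequality tailored to this structure, and in addition treat the set $\{\mathbf{\Phi}=0\}$ with the usual care. Everything else---the splitting case, the weighted Simons identities, and the cut-off computation---is routine once the cancellation of the $|\mathbf{\Phi}|^{4}$ and $\tfrac{H^{2}}{m}|\mathbf{\Phi}|^{2}$ terms has been exploited.
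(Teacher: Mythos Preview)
Your cut--off computation is correct: testing $f$--stability with $u=|\mathbf{\Phi}|\varphi$ and combining with \eqref{EqSimonTraceless} does yield
\[
\int_{\Sigma}\big(|\nabla\mathbf{\Phi}|^{2}-|\nabla|\mathbf{\Phi}||^{2}\big)\varphi^{2}e^{-f}\leq\int_{\Sigma}|\mathbf{\Phi}|^{2}|\nabla\varphi|^{2}e^{-f},
\]
and letting $R\to\infty$ gives $|\nabla\mathbf{\Phi}|^{2}=|\nabla|\mathbf{\Phi}||^{2}$ on $\Sigma$. The gap is precisely where you flag it: from this equality you need a refined Kato inequality $|\nabla\mathbf{\Phi}|^{2}\geq(1+\delta_{m})|\nabla|\mathbf{\Phi}||^{2}$ with $\delta_{m}>0$ to force $\nabla|\mathbf{\Phi}|\equiv 0$. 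But on a translator $H$ is not constant, so $\mathbf{\Phi}$ is \emph{not} Codazzi: one has $\nabla_{k}\Phi_{ij}-\nabla_{i}\Phi_{kj}=\tfrac{1}{m}(\nabla_{i}H\,\delta_{kj}-\nabla_{k}H\,\delta_{ij})$ and $\mathrm{div}\,\mathbf{\Phi}=\tfrac{m-1}{m}\nabla H$. The standard refined Kato inequalities (for Codazzi tensors, or for trace--free divergence--free tensors) therefore do not apply, and the equality $|\nabla\mathbf{\Phi}|^{2}=|\nabla|\mathbf{\Phi}||^{2}$ only tells you that, where $\mathbf{\Phi}\neq 0$, $\nabla\mathbf{\Phi}=\nabla\log|\mathbf{\Phi}|\otimes\mathbf{\Phi}$; this is not enough to conclude $\nabla\mathbf{\Phi}\equiv 0$ without further argument, and I do not see how to close this step as written.

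The paper sidesteps this issue entirely. Since $H$ does not change sign, the maximum principle applied to \eqref{EqMeanCurv} gives either $H\equiv 0$ (your splitting case) or $|H|>0$. In the latter case one does not merely have \emph{some} positive solution of $\Delta_{f}\omega+|\mathbf{A}|^{2}\omega=0$---one has the explicit solution $\omega=H$ (or $-H$). Applying the Liouville--type comparison (Lemma~\ref{Liouville}) with $u=|\mathbf{\Phi}|$ and $v=H$ gives directly $|\mathbf{\Phi}|=DH$ for a constant $D\geq 0$, hence $|\mathbf{A}|=\sqrt{D^{2}+\tfrac{1}{m}}\,H$. Feeding this into \eqref{EqSimon} and \eqref{EqMeanCurv} yields $|\nabla\mathbf{A}|^{2}=|\nabla|\mathbf{A}||^{2}$, and then the geometric rigidity argument from the proof of Theorem~\ref{MainA} (the analysis of $\mathrm{Ker}\,\mathbf{A}$ and Lawson's theorem) finishes, ruling out the grim reaper cylinder because $|\mathbf{\Phi}|\notin L^{2}(\Sigma_{f})$ there. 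In short: the extra information ``the positive solution is $H$ itself'' replaces the missing refined Kato step and leads to a proportionality between $|\mathbf{A}|$ and $H$ that your approach never reaches.
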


In the second part of the paper we focus our attention on topological properties of translators.

Exploiting in an essential way the correspondence, discovered by K. Smoczyk, \cite{Smoc}, between translators of the mean curvature flow in $\mathbb{R}^{m+1}$ and minimal hypersurfaces in the manifold $\mathbb{R}^{m+1}\times\mathbb{R}$, endowed with a suitable warped product metric, we obtain the validity  of a weighted $(m+1)$--dimensional $L^1$ Sobolev inequality on translators. The validity of this inequality permits to obtain very neat results on the topology at infinity of $f$--stable translators and translator with finite $f$--index. The main tools here are weighted versions of the Li--Tam theory and of an abstract finiteness result. For more details see \cite{IR}.

\begin{thm}\label{MainD}
Let $x:\Sigma^{m\geq 2}\to\mathbb{R}^{m+1}$ be a $f$--stable translator. Then $\Sigma$ has at most one end.
\end{thm}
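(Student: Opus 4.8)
The plan is to follow the scheme used by Cao--Shen--Zhu for complete stable minimal hypersurfaces of $\mathbb{R}^{m+1}$, carried over to the weighted setting as in \cite{IR}; the new input that makes everything work for every $m\ge 2$ (in particular for surfaces) is the weighted $(m+1)$--dimensional $L^1$ Sobolev inequality on $\Sigma$ produced earlier in the paper via Smoczyk's correspondence, namely
\[
 \Big(\int_\Sigma |\varphi|^{\frac{m+1}{m}}\,e^{-f}\,d\mathrm{vol}_\Sigma\Big)^{\frac{m}{m+1}}\le C_m\int_\Sigma|\nabla\varphi|\,e^{-f}\,d\mathrm{vol}_\Sigma,\qquad \varphi\in C_c^\infty(\Sigma).
\]
From this I would first deduce, by standard capacity estimates, that $\mathrm{vol}_f(\Sigma)=\infty$ and that $\Sigma$, as well as each of its ends, is $f$--non--parabolic; here it is essential that the Sobolev dimension is $m+1\ge 3$ and not $m$ (for $m=2$ an $m$--dimensional Sobolev inequality would not suffice, whereas the gain of one dimension coming from Smoczyk's correspondence does).

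I would then argue by contradiction: suppose $\Sigma$ has at least two ends. Since every end is $f$--non--parabolic, the weighted Li--Tam theory developed in \cite{IR} produces a nonconstant bounded $f$--harmonic function $u$ on $\Sigma$ with finite weighted Dirichlet energy, $\int_\Sigma|\nabla u|^2 e^{-f}\,d\mathrm{vol}_\Sigma<\infty$; equivalently, $\omega=du$ is a nontrivial $L^2(\Sigma_f)$ $f$--harmonic $1$--form. The aim is to combine $f$--stability, which by definition reads $\int_\Sigma|\mathbf{A}|^2\varphi^2 e^{-f}\le\int_\Sigma|\nabla\varphi|^2 e^{-f}$ for all $\varphi\in C_c^\infty(\Sigma)$, with the curvature identity \eqref{CurvBnd} (for a translator, $\mathrm{Ric}_f(X,X)=-|\mathbf{A}X|^2\ge-|\mathbf{A}|^2|X|^2$) in order to force $\omega\equiv 0$, a contradiction.

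Concretely, set $h=|\omega|=|\nabla u|$. From the weighted Bochner--Weitzenb\"ock formula, the equation $\Delta_f u=0$, the bound \eqref{CurvBnd} and the Kato inequality I would derive, weakly on $\Sigma$, a Simons--type differential inequality $h\,\Delta_f h\ge \tfrac{1}{m-1}|\nabla h|^2-|\mathbf{A}|^2 h^2-(\text{drift term})$. Testing it against $\psi^2$ for a family of cut--offs $\psi$ supported in geodesic annuli $B_{2R}\setminus B_R$ with $|\nabla\psi|\le c/R$, integrating by parts, and comparing with the stability inequality applied to $\varphi=\psi h$, the curvature terms $\int|\mathbf{A}|^2\psi^2 h^2 e^{-f}$ cancel; letting $R\to\infty$ and using $h^2\in L^1(\Sigma_f)$ one is left controlling $\int_\Sigma|\nabla h|^2 e^{-f}$, and after disposing of the drift contribution this forces $\nabla h\equiv 0$, so $h\equiv\mathrm{const}$. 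Since $\mathrm{vol}_f(\Sigma)=\infty$ while $h\in L^2(\Sigma_f)$, the constant must be $0$, i.e.\ $u$ is constant --- contradicting the choice of $u$. Hence $\Sigma$ has at most one end.

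The hard part is exactly this ``drift term''. The refined Kato inequality $|\mathrm{Hess}\,u|^2\ge\frac{m}{m-1}|\nabla|\nabla u||^2$ holds only when $\Delta u=0$, whereas here $\Delta u=\langle\nabla f,\nabla u\rangle$ (with $|\nabla f|\le 1$ on a translator, since $|\nabla f|^2=|v^{\top}|^2=1-H^2$), and the refined gain $\tfrac1{m-1}|\nabla h|^2$ is partly consumed by a term of the form $\tfrac{2}{m-1}h|\nabla h|$. One must absorb this either using the $(m+1)$--dimensional Sobolev inequality --- another reason the extra dimension is indispensable --- or else extract from the equality case the rigidity that $\omega/|\omega|$ is parallel on $\{\omega\neq0\}$ and exclude the resulting isometric splitting $\Sigma\cong N^{m-1}\times\mathbb{R}$: such a splitting, together with at least two ends, would force the cross section $N$ to be compact, hence (by the cylinder theorem) a closed minimal hypersurface of a hyperplane, which does not exist. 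Checking the regularity needed to use $h=|\nabla u|$ as a test function across its zero set, and making the cancellation of the curvature terms precise, are the remaining points that require care.
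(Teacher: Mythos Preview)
Your setup --- the $(m+1)$--dimensional weighted $L^1$ Sobolev inequality from Smoczyk's correspondence, the consequence that every end is non--$f$--parabolic (and $\mathrm{vol}_f(\Sigma)=\infty$), and the weighted Li--Tam construction of a nonconstant bounded $f$--harmonic $u$ with $|\nabla u|\in L^2(\Sigma_f)$ --- is exactly the paper's. The divergence is at the vanishing/rigidity step, and there your argument is not complete.

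The paper does \emph{not} use the refined Kato inequality, and therefore never encounters your ``drift term''. From the weighted Bochner formula, \eqref{CurvBnd} and the \emph{ordinary} Kato inequality one gets directly that $h=|\nabla u|$ satisfies $\Delta_f h+|\mathbf{A}|^2 h\ge 0$ in the weak sense. Instead of testing this against cut--offs and attempting to cancel against the stability inequality, the paper invokes the abstract Liouville--type comparison (Lemma~\ref{Liouville}) with $u=h\in L^2(\Sigma_f)$ and $v=\omega>0$ the $f$--stability eigenfunction solving $\Delta_f\omega+|\mathbf{A}|^2\omega=0$, obtaining $h\equiv C\omega$. The gain is that the equality case of Lemma~\ref{Liouville} (case (ii)) then forces equality in the curvature estimate, namely $\mathrm{Ric}_f(\nabla u,\nabla u)=-|\mathbf{A}|^2|\nabla u|^2$ pointwise. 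Computing $\mathrm{tr}(\mathrm{Ric}_f)$ in a frame with $E_1=\nabla u/|\nabla u|$ in two ways --- once using this equality for $E_1$ and \eqref{CurvBnd} for $E_2,\dots,E_m$, and once via Gauss' equation together with \eqref{Eqf} to get $\mathrm{tr}(\mathrm{Ric}_f)=-|\mathbf{A}|^2$ --- yields $\sum_{i\ge 2}|\mathbf{A}E_i|^2=0$, hence $|\mathbf{A}|^2=H^2$ and vanishing scalar curvature. The classification of translators with $\mathrm{Scal}\equiv 0$ (Corollary~2.4 in \cite{MHSS}) then forces $\Sigma$ to be a hyperplane or a grim reaper cylinder, both with one end, contradicting the assumption.

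By contrast, your path through the refined Kato inequality is genuinely obstructed: the identity $|\mathrm{Hess}\,u|^2\ge\frac{m}{m-1}|\nabla|\nabla u||^2$ needs $\Delta u=0$, not $\Delta_f u=0$, and the correction terms it produces are precisely the ones you call ``hard''. Your proposed remedy~(a) --- absorbing $\frac{2}{m-1}h|\nabla h|$ via the $(m+1)$--dimensional Sobolev inequality --- is not explained and I do not see a mechanism for it. Remedy~(b) has a real gap: even if the equality case yields that $\nabla u/|\nabla u|$ is parallel and $\Sigma$ splits \emph{intrinsically} as $N\times\mathbb{R}$, there is no reason the splitting is extrinsic in $\mathbb{R}^{m+1}$, so the assertion that the compact factor $N$ would be a closed minimal hypersurface of a hyperplane is unjustified. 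The paper sidesteps all of this by using only standard Kato, the comparison Lemma~\ref{Liouville}, and a translator--specific rigidity (the trace computation plus \cite{MHSS}) rather than an abstract splitting.
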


\begin{remark}
\rm{
As observed above, Theorem \ref{MainD} in particular applies to translators with $|H|>0$. Moreover, in case $H\equiv 0$, i.e. $\Sigma=\tilde{\Sigma}\times\mathbb{R}$ with $\tilde{\Sigma}$ minimal, $\Sigma$ has only one end (see e.g. Lemma 9.5 in \cite{PRS-Book}). This permits to deduce that, for $m\geq 2$, if $H$ does not change sign, then the translator has at most one end.}
\end{remark}
\begin{remark}
\rm{
As a straightforward consequence of Theorem \ref{MainD} we get that the translating catenoid defined in Example \ref{BowlTC} is an example of $f$-unstable translator. 
}
\end{remark}

Concerning the topology at infinity of translators with finite $f$--index we obtain the following

\begin{thm}\label{MainB}
Let $x:\Sigma^{m\geq 2}\to\mathbb{R}^{m+1}$ be a translator with finite $f$--index. Then $\Sigma$ has finitely many ends.
\end{thm}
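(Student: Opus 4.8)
The plan is to reduce Theorem \ref{MainB} to a finiteness statement for the number of ends of a complete weighted manifold, following the scheme indicated in the introduction: first convert the finite-index hypothesis into a statement outside a compact set, then invoke a weighted Sobolev inequality on the translator, and finally apply a weighted abstract finiteness result from \cite{IR} combined with weighted Li--Tam theory.

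\medskip

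First I would recall that since $\Sigma$ has finite $f$--index, there exists a compact set $K\subset\Sigma$ such that $\Sigma\setminus K$ is $f$--stable; this is the standard trick (as in the unweighted case) of exhausting $\Sigma$ by bounded domains and using that the index stabilizes, so that on the complement of a sufficiently large geodesic ball the operator $L_f=-\Delta_f-|\mathbf{A}|^2$ is nonnegative on compactly supported functions. Thus on each end $E$ of $\Sigma\setminus K$ one has the $f$--stability inequality $\int_E |\nabla u|^2 e^{-f} \geq \int_E |\mathbf{A}|^2 u^2 e^{-f}$ for all $u\in C_c^\infty(E)$. The key geometric input, announced in the excerpt, is a weighted $(m+1)$--dimensional $L^1$ Sobolev inequality on $\Sigma$, obtained via Smoczyk's correspondence between translators in $\mathbb R^{m+1}$ and minimal hypersurfaces in a warped product $\mathbb R^{m+1}\times\mathbb R$: applying the Michael--Simon Sobolev inequality on that minimal hypersurface and unwinding the conformal/warping factor produces an inequality of the form $\left(\int_\Sigma |\varphi|^{\frac{m+1}{m}} e^{-f}\right)^{\frac{m}{m+1}} \leq C\int_\Sigma\left(|\nabla\varphi| + |\mathbf{H}_{\text{ambient}}|\,|\varphi|\right) e^{-f}$, which after controlling the relevant mean curvature term yields a genuine weighted Sobolev inequality valid on $\Sigma$ and hence on each end.

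\medskip

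With the weighted Sobolev inequality in hand, I would then apply the weighted version of the abstract finiteness theorem of \cite{IR}: on a complete weighted manifold supporting an $L^1$ Sobolev inequality of dimension $n>2$, on which the bottom of the spectrum of a Schr\"odinger-type operator satisfies a suitable positivity together with an integral decay condition on the potential, the space of bounded $f$--harmonic functions with finite weighted Dirichlet energy is finite dimensional, and its dimension bounds the number of non-parabolic ends; combined with the fact that a weighted manifold with only one end in the ``small'' (parabolic) category contributes at most one, one concludes that the total number of ends is finite. Concretely, the $f$--stability inequality on $\Sigma\setminus K$ plays the role of the spectral hypothesis, the Sobolev inequality provides the functional-analytic framework, and weighted Li--Tam theory produces, on a manifold with $k$ ends, a $k$-dimensional space of bounded $f$--harmonic functions of finite energy; finiteness of that space forces $k<\infty$. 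One must also handle the compact piece $K$, which only changes the count by a bounded amount since $\Sigma\setminus K$ has finitely many components precisely when $\Sigma$ does.

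\medskip

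The main obstacle I expect is establishing the weighted Sobolev inequality with a clean enough constant and verifying that the potential $|\mathbf{A}|^2$ satisfies the integral/decay hypotheses required by the abstract finiteness result — in particular, showing that the a priori control coming from finite $f$--index (which a priori only gives $|\mathbf{A}|^2$ small ``in an averaged sense'' near infinity via the stability inequality) is enough to feed into the Li--Tam machinery. The interplay between the warped-product geometry in Smoczyk's picture and the weight $e^{-f}=e^{\langle v,x\rangle}$ (which is unbounded) is the delicate point: one needs the Sobolev inequality to survive after passing through the conformal change, and one needs the ends of $\Sigma$ to correspond correctly to ends in the weighted sense. Once these analytic inputs are secured, the topological conclusion is a formal consequence of the cited weighted finiteness and Li--Tam results, exactly as in \cite{IR}.
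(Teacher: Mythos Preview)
Your proposal identifies the right ingredients --- Smoczyk's correspondence for the weighted Sobolev inequality, Li--Tam theory, and the abstract finiteness machinery from \cite{IR} --- and the overall strategy is correct. However, the architecture you sketch differs from the paper's in a way that makes you worry about obstacles that do not actually arise.

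In the paper, the two analytic ingredients play \emph{separate} roles. First, the curvature bound $\mathrm{Ric}_f \geq -|\mathbf{A}|^2$ together with finite $f$--index feed directly into Corollary~1 of \cite{IR} (stated here as Proposition~\ref{prop_coro1}), which gives finitely many non--$f$--parabolic ends; this step uses the weighted Fischer--Colbrie reduction and the abstract finiteness theorem, but needs \emph{no} Sobolev inequality and \emph{no} integrability or decay hypothesis on $|\mathbf{A}|^2$. Second, the weighted $L^1$ Sobolev inequality (obtained via Smoczyk's warped product, where the ambient is Cartan--Hadamard so Hoffman--Spruck applies) is used only to show that \emph{every} end is non--$f$--parabolic: it forces infinite $f$--volume of each end and, via the weighted Cao--Shen--Zhu argument, rules out $f$--parabolic ends. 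Combining the two gives the result immediately.

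By contrast, you route the Sobolev inequality through the finiteness theorem itself and then raise the concern that $|\mathbf{A}|^2$ might fail an ``integral decay condition on the potential''. That concern is an artifact of your architecture: once you use the paper's decomposition, no such hypothesis is needed, and the ``delicate point'' you anticipate about the unbounded weight $e^{-f}$ disappears because the Sobolev inequality is used only qualitatively (non--$f$--parabolicity), not quantitatively inside the harmonic--function estimate. Your worry about the Michael--Simon mean--curvature term is also unnecessary: the warped ambient is Cartan--Hadamard and $\hat\Sigma$ is \emph{minimal} there, so Hoffman--Spruck gives the clean inequality directly with no residual $|\mathbf{H}|$ term.
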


Under the geometric assumption that the translator is contained in a halfspace determined by an hyperplane orthogonal to the translating direction $v$, we are able to guarantee that the standard ($m$--dimensional) $L^1$ Sobolev inequality holds. Hence, as a consequence of Proposition 8 in \cite{IR},  the same conclusion of Theorem \ref{MainB} holds replacing the finiteness of the $f$--index by the integrability condition $|\mathbf{A}|\in L^{m}(\Sigma_f)$. Actually, in this situation we are able to prove the following stronger result. 

\begin{thm}\label{MainC}
Let $x:\Sigma^m\to\mathbb{R}^{m+1}$, $m\geq3$, be a translator contained in the halfspace $\Pi_{v,a}:=\{p\in \mathbb{R}^{m+1}:\langle p,v\rangle \geq a\}$, for some $a\in\mathbb{R}$. Assume that $|\mathbf{A}|\in L^m(\Sigma_f)$. Then $\Sigma$ is properly immersed and has finite topological type, i.e., there exists a smooth compact subset $\Omega\subset\subset\Sigma$ such that $\Sigma\setminus\Omega$ is diffeomorphic to the half--cylinder $\partial\Omega\times[0,+\infty)$.
\end{thm}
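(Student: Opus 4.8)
The halfspace hypothesis is exactly what converts the weighted integrability of $|\mathbf A|$ into unweighted integrability and yields a Euclidean--type Sobolev inequality on $\Sigma$; once these are available, the statement follows from the structure theory of hypersurfaces of finite total curvature. For the first step, note that since $f=-\langle x,v\rangle$ and $\Sigma\subset\Pi_{v,a}$ we have $e^{-f}=e^{\langle x,v\rangle}\ge e^{a}>0$ on $\Sigma$, so $|\mathbf A|\in L^m(\Sigma_f)$ forces $|\mathbf A|\in L^m(\Sigma)$ with respect to the Riemannian measure, and hence $\mathbf H\in L^m(\Sigma)$ as well, because $|\mathbf H|\le\sqrt m\,|\mathbf A|$. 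The main analytic point is then to establish the standard $m$--dimensional $L^1$ Sobolev inequality
\[
\Big(\int_{\Sigma}|u|^{\frac{m}{m-1}}\,d\mathrm{vol}_\Sigma\Big)^{\frac{m-1}{m}}\le C(m)\int_{\Sigma}|\nabla u|\,d\mathrm{vol}_\Sigma,\qquad u\in C_c^\infty(\Sigma).
\]
One starts from the Michael--Simon inequality $\|u\|_{\frac{m}{m-1}}\le C(m)\big(\|\nabla u\|_{L^1}+\|\,|\mathbf H|\,u\,\|_{L^1}\big)$, valid on any submanifold of $\mathbb R^{m+1}$, and must absorb the mean--curvature term. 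Here the halfspace hypothesis enters decisively, through the identity $\Delta_\Sigma\langle x,v\rangle=\langle\mathbf H,v\rangle=|\mathbf H|^2$ with $|\nabla_\Sigma\langle x,v\rangle|=|v^{\top}|\le1$ (which yields $\int_\Sigma|\mathbf H|^2u^2\le 2\int_\Sigma u|\nabla u|$ for $u\in C_c^\infty(\Sigma)$), together with the fact that, $\mathbf H$ being in $L^m(\Sigma)$, its $L^m$--mass outside a large subset of $\Sigma$ is as small as we wish. I expect this absorption to be the most delicate point of the argument.

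Once the Sobolev inequality is available, Proposition~8 of \cite{IR} --- arguing exactly as for Theorem~\ref{MainB} --- gives that $\Sigma$ has finitely many ends. To obtain finite topological type I would upgrade this to a pointwise decay of the second fundamental form along the ends. Since $\int_\Sigma|\mathbf A|^m<\infty$, the $L^m$--mass of $\mathbf A$ over intrinsic unit balls tends to $0$ as the center diverges; combining the Sobolev inequality with the Simons--type inequality for translators (whose non--minimal terms are lower order, since $|\mathbf H|=|v^\perp|\le1$) and running a Moser iteration of De Giorgi--Nash type --- where the hypothesis $m\ge3$ is used, to pass from the $L^1$ to the $L^2$ Sobolev inequality --- one gets $\sup_{B_1(p)}|\mathbf A|\le C\,\|\mathbf A\|_{L^m(B_2(p)\cap\Sigma)}$ for $p$ outside a compact set, hence $|\mathbf A|\to0$ along every end. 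This $\varepsilon$--regularity step is the technical heart.

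Finally, with only finitely many ends and $|\mathbf A|\to0$ at infinity, each end of $\Sigma$ is, outside a compact set, embedded, properly immersed, and a graph with small gradient over the complement of a bounded domain in an affine hyperplane (analysis of the tangent cone at infinity and reduction to the graphical case, together with elliptic estimates; this is classical for hypersurfaces of finite total curvature and applies here because the mean curvature of a translator is bounded). In particular the immersion is proper, and there is a smooth compact $\Omega\subset\subset\Sigma$ such that $\Sigma\setminus\Omega$ is diffeomorphic to $\partial\Omega\times[0,+\infty)$. The two places where real work is needed are the clean Sobolev inequality above --- Michael--Simon alone does not suffice, and the mean--curvature term must genuinely be removed using the geometry of the halfspace --- and the $\varepsilon$--regularity/iteration producing the decay of $|\mathbf A|$ for a non--minimal hypersurface; the remaining passage to ``properly immersed with finite topological type'' is then standard.
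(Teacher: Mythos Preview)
Your overall strategy is viable, but it diverges from the paper's at two key points, and one of your steps as written does not give what you need.

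\textbf{The Sobolev inequality.} The paper does not go through Michael--Simon and an absorption of the $|H|u$ term. Instead it observes (Lemma~\ref{Thm_mWeightSobL1}) that under the conformal change $\widetilde g=e^{-2f/m}\langle\,,\,\rangle_{\mathbb R^{m+1}}$ the ambient space becomes Cartan--Hadamard and the translator becomes \emph{minimal}; Hoffman--Spruck then gives the $m$--dimensional $L^1$ Sobolev inequality on $\widetilde\Sigma$, and the halfspace hypothesis is used only in the trivial way $e^{f/m}\le e^{-a/m}$ when converting back. This yields directly the \emph{weighted} inequality
\[
\Big(\int_\Sigma h^{\frac{m}{m-1}}e^{-f}\Big)^{\frac{m-1}{m}}\le D\int_\Sigma|\nabla h|\,e^{-f},
\]
without any delicate absorption. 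Your route can be made to work---the point (b), that $\|H\|_{L^m(\Sigma\setminus K)}$ is small for $K$ large, is what actually absorbs $\int|H||u|$ via H\"older for $u$ supported outside $K$---but your point (a), the identity $\int H^2u^2\le 2\int u|\nabla u|$, does not by itself control $\int|H||u|$ and is not needed. The paper's conformal trick bypasses all of this and keeps the argument in the weighted world, matching the hypothesis $|\mathbf A|\in L^m(\Sigma_f)$ and the weighted Simons inequality $\Delta_f|\mathbf A|+|\mathbf A|^3\ge0$ without ever passing to the unweighted measure.

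\textbf{The decay of $|\mathbf A|$ and the conclusion.} Your $\varepsilon$--regularity at \emph{unit} scale only yields $|\mathbf A|\to0$, and your final paragraph then tries to deduce properness and graphicality from this alone. That is not enough: one needs the scale--invariant estimate, i.e.\ iterating on balls of radius comparable to the intrinsic distance $r(p)$, to obtain
\[
\sup_{\Sigma\setminus B_R^\Sigma(o)}|\mathbf A|=o(R^{-1}),\qquad R\to\infty,
\]
which is precisely the hypothesis of the paper's Lemma~\ref{lemma_finitetoptype} (the Anderson--type criterion). The paper obtains this decay by adapting the argument of \cite{PV2} to the weighted setting (weighted $L^2$ Sobolev plus the weighted Simons inequality, with $|\mathbf A|\in L^m(\Sigma_f)$), and then invokes Lemma~\ref{lemma_finitetoptype} directly; the intermediate step ``finitely many ends'' and the tangent--cone discussion are not needed. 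So: run your Moser iteration at scale $R$, not $1$, and then quote Anderson's lemma rather than reproving it.
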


The paper is organized as follows. In Section \ref{SectBasicEq} we collect basic equations that we shall use in the proofs of our results. Section \ref{SectRigRes} is devoted to the proof of Theorem  \ref{MainA} and Theorem \ref{MainA1}. In Section \ref{SectSobIneq} we derive the weighted $L^1$ Sobolev inequalities for translators that will be employed in Section \ref{SectTopCons} to prove Theorems \ref{MainD}, \ref{MainB}, and \ref{MainC}. A final section is devoted to some remarks on boundedness properties of translators.

\section{Basic equations}\label{SectBasicEq}
In the following lemma we collect some useful identities for basic geometric quantities on translators, naturally involving the weighted Laplacian $\Delta_{f}$.

\begin{lemma}
Let $x:\Sigma^m\to\mathbb{R}^{m+1}$ be a translator of the mean curvature flow, set $f=-\left\langle x, v\right\rangle$, and let $\mathbf{\Phi}=\mathbf{A}-\frac{H}{m}\mathbf{Id}$ be the traceless second fundamental form. Then
\begin{align}
\Delta_{f}f=&-1,\label{Eqf}\\
\Delta_{f}|x|^2=&2\left(m+\left\langle x, v\right\rangle\right),\label{Eqx2}\\
\Delta_{f}H=&-|\mathbf{A}|^2H,\label{EqMeanCurv}\\
\Delta_{f}|\mathbf{A}|^2=&2|\nabla \mathbf{A}|^2-2|\mathbf{A}|^4,\label{EqSimon}\\
\Delta_{f}|\mathbf{\Phi}|^2=&2|\nabla\mathbf{\Phi}|^2-2|\mathbf{A}|^2|\mathbf{\Phi}|^2.\label{EqSimonTraceless}
\end{align}
\end{lemma}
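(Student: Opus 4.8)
My plan is to derive all five identities by specializing well-known formulas for $f$-minimal hypersurfaces (equivalently, for the drift Laplacian associated to the weight $f=-\langle x,v\rangle$), using throughout the basic facts that $\overline{\nabla}f = -v^\top$ on $\Sigma$, that $\mathbf{H} = v^\bot = \mathbf{H}_f - (\overline{\nabla}f)^\bot$ forces $\mathbf{H}_f \equiv 0$, and that the ambient space is flat. The organizing observation is that $\Delta_f g = \Delta g - \langle \nabla f, \nabla g\rangle = \Delta g + \langle v^\top, \nabla g\rangle$ for $g \in C^\infty(\Sigma)$.

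\emph{Equations \eqref{Eqf} and \eqref{Eqx2}.} For any ambient linear function $\ell(p) = \langle p, w\rangle$ restricted to $\Sigma$, one has $\nabla \ell = w^\top$ and $\Delta \ell = \langle \mathbf{H}, w\rangle = \langle v^\bot, w\rangle = \langle v, w^\bot\rangle$. Taking $w = -v$ gives $\Delta f = -\langle v, v^\bot\rangle = -|v^\bot|^2$, and $\langle \nabla f, \nabla f\rangle = |v^\top|^2$, so $\Delta_f f = -|v^\bot|^2 - |v^\top|^2 = -1$, which is \eqref{Eqf}. For \eqref{Eqx2} I would use $\Delta |x|^2 = 2m + 2\langle x, \mathbf{H}\rangle = 2m + 2\langle x, v^\bot\rangle$ (the standard formula $\Delta |x|^2 = 2m + 2\langle x,\mathbf{H}\rangle$ on an $m$-dimensional submanifold of Euclidean space) and $\langle \nabla f, \nabla |x|^2\rangle = -\langle v^\top, 2x^\top\rangle = -2\langle v, x^\top\rangle$; adding, the tangential and normal pieces of $\langle x,v\rangle$ recombine to give $\Delta_f |x|^2 = 2m + 2\langle x, v\rangle$.

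\emph{Equations \eqref{EqMeanCurv}, \eqref{EqSimon}, \eqref{EqSimonTraceless}.} Here I would invoke the general Simons-type identities for $f$-minimal hypersurfaces together with the curvature bound \eqref{CurvBnd}. For \eqref{EqMeanCurv}: the scalar weighted mean curvature $H$ of an $f$-minimal hypersurface (where $H = \langle \mathbf{H},\nu\rangle$ with our sign convention) satisfies $\Delta_f H = -(|\mathbf{A}|^2 + \overline{\mathrm{Ric}}_f(\nu,\nu))H$; since the ambient metric is flat and $\mathrm{Hess}(f)=0$ in $\mathbb{R}^{m+1}$ (as $f$ extends to the linear function $-\langle\cdot,v\rangle$), $\overline{\mathrm{Ric}}_f(\nu,\nu)=0$ and we get \eqref{EqMeanCurv} — this is exactly the statement that $H$ lies in the kernel-like relation $L_f H = 0$ with $L_f = -\Delta_f - |\mathbf{A}|^2$. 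For \eqref{EqSimon}, the weighted Simons identity for $f$-minimal hypersurfaces reads $\tfrac12\Delta_f|\mathbf{A}|^2 = |\nabla\mathbf{A}|^2 + \langle \mathbf{A}, \nabla^2 \mathbf{H}_f\rangle + (\text{curvature terms})$; with $\mathbf{H}_f\equiv 0$ and the flat ambient space, the curvature terms collapse to $-|\mathbf{A}|^4$, giving \eqref{EqSimon}. Finally \eqref{EqSimonTraceless} follows from \eqref{EqSimon} and \eqref{EqMeanCurv} by writing $|\mathbf{\Phi}|^2 = |\mathbf{A}|^2 - \tfrac{H^2}{m}$, computing $\Delta_f$ of each term, and using the Kato-type inequality bookkeeping $|\nabla\mathbf{\Phi}|^2 = |\nabla\mathbf{A}|^2 - \tfrac{1}{m}|\nabla H|^2$; one checks $\Delta_f\big(\tfrac{H^2}{m}\big) = \tfrac{2}{m}H\Delta_f H + \tfrac{2}{m}|\nabla H|^2 = -\tfrac{2}{m}|\mathbf{A}|^2 H^2 + \tfrac{2}{m}|\nabla H|^2$, and substitutes, so that the $|\nabla H|^2$ terms cancel and $-2|\mathbf{A}|^4 + \tfrac{2}{m}|\mathbf{A}|^2 H^2 = -2|\mathbf{A}|^2|\mathbf{\Phi}|^2$.

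\emph{Main obstacle.} The computations for \eqref{Eqf} and \eqref{Eqx2} are elementary and self-contained. The real work is in \eqref{EqSimon}: one must either cite a clean weighted Simons identity (e.g. from \cite{ChMeZh1} or \cite{IR}) in the precise normalization used here and verify that the flat ambient curvature kills every extra term except $-2|\mathbf{A}|^4$, or reprove it by differentiating the Codazzi equations twice and commuting derivatives, carefully tracking the $\langle v^\top, \cdot\rangle$ drift terms produced by replacing $\Delta$ with $\Delta_f$. The sign conventions ($\mathbf{A} = Ddx$, $\mathbf{H} = \mathrm{tr}(\mathbf{A})$, $\nu$ inward, $f = -\langle x,v\rangle$) must be propagated consistently; a single sign slip flips \eqref{EqMeanCurv}, so I would double-check \eqref{EqMeanCurv} directly from $\Delta H = \mathrm{div}(\nabla H)$ plus the translator equation $\nabla H = -\mathbf{A}(v^\top)$ (which itself follows by differentiating $H = \langle v,\nu\rangle$ and using Codazzi), independently of the general $f$-minimal machinery.
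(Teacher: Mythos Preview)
Your proposal is correct and follows essentially the same approach as the paper: direct computation of \eqref{Eqf}, \eqref{Eqx2}, \eqref{EqMeanCurv} from the translator equation (you package the Hessian computations into the standard identities $\Delta\langle x,w\rangle=\langle\mathbf{H},w\rangle$ and $\Delta|x|^2=2m+2\langle x,\mathbf{H}\rangle$, while the paper writes out the Hessians explicitly), a citation for the weighted Simons identity \eqref{EqSimon}, and the same algebraic derivation of \eqref{EqSimonTraceless} from \eqref{EqSimon}, \eqref{EqMeanCurv} and $|\mathbf{\Phi}|^2=|\mathbf{A}|^2-\tfrac{H^2}{m}$. One small wording point: in the last step the $|\nabla H|^2$ term does not ``cancel'' but is absorbed into $|\nabla\mathbf{\Phi}|^2$ via $|\nabla\mathbf{\Phi}|^2=|\nabla\mathbf{A}|^2-\tfrac1m|\nabla H|^2$, exactly as you already noted.
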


\begin{proof}
To prove \eqref{Eqf}, letting $Y\in T\Sigma$, we compute
\begin{align*}
\left\langle \nabla f, Y\right\rangle=&-Y\left\langle x,v\right\rangle=-\left\langle \overline{\nabla}_{Y}x, v\right\rangle\\
=&-\left\langle Y,v\right\rangle.
\end{align*}
Hence, by \eqref{TranslEq}, 
\[
\ \nabla f=-v^{T}=-v+\left\langle v, \nu\right\rangle\nu=-v+\mathbf{H}.
\]
Letting now $Y, Z\in T\Sigma$, we have that
\begin{align*}
\mathrm{Hess}\,f(Y,Z)=&\left\langle \nabla_{Y}\nabla f, Z\right\rangle=\left\langle \overline{\nabla}_{Y}\nabla f, Z\right\rangle\\
=&-\left\langle \overline{\nabla}_{Y}v, Z\right\rangle+\left\langle v, \nu\right\rangle\left\langle \overline{\nabla}_{Y}\nu, Z\right\rangle\\
=&-\left\langle v, \nu\right\rangle\left\langle A Y, Z\right\rangle\\
=&-H\left\langle AY, Z\right\rangle.
\end{align*}
Taking the trace of this latter and using \eqref{TranslEq}, we obtain that
\begin{equation*}
\Delta_{f}f=\Delta f-|\nabla f|^2=-H^2-|v^{T}|^2=-1,
\end{equation*}
that is, \eqref{Eqf}.

As for \eqref{Eqx2}, with a similar computation, we get that for $Y\in T\Sigma$,
\begin{equation*}
\left\langle \nabla|x|^2, Y\right\rangle=Y|x|^2=2\left\langle \overline{\nabla}_{Y}x,x\right\rangle=2\left\langle Y,x\right\rangle,
\end{equation*}
i.e.,
\[
\ \nabla|x|^2=2x^{T}=2x-2\left\langle x, \nu\right\rangle\nu.
\]
Thus, we obtain that for $Y, Z\in T\Sigma$,
\begin{align*}
\mathrm{Hess}\,|x|^2(Y,Z)=&\left\langle \nabla_{Y}\nabla|x|^2,Z\right\rangle=2\left\langle \overline{\nabla}_{Y}\left(x-\left\langle x, \nu\right\rangle\nu\right), Z\right\rangle\\
=&2\left\langle Y, Z\right\rangle-2\left\langle x, \nu\right\rangle\left\langle \overline{\nabla}_{Y}\nu, Z\right\rangle\\
=&2\left(\left\langle Y, Z\right\rangle+\left\langle x, \nu\right\rangle\left\langle AY, Z\right\rangle\right).
\end{align*}
Taking the trace in the previous equation we get
\[
\ \Delta|x|^2=2\left(m+H\left\langle x, \nu\right\rangle\right),
\]
and hence,
\begin{align*}
\Delta_{f}|x|^2=&\Delta|x|^2-\left\langle \nabla f, \nabla |x|^2\right\rangle=2\left(m+H\left\langle x, \nu\right\rangle+\left\langle v^T, x^T\right\rangle\right)\\
=&2\left(m+H\left\langle x, \nu\right\rangle+\left\langle v, x\right\rangle-\left\langle v, \nu\right\rangle\left\langle x, \nu\right\rangle\right)\\
=&2\left(m+\left\langle v, x\right\rangle\right),
\end{align*}
that is, \eqref{Eqx2}.

Analogously, 
\begin{equation*}
Y(H)=\left\langle \overline{\nabla}_{Y} \nu,v\right\rangle=-\left\langle AY, v\right\rangle=-\left\langle Av^T, Y\right\rangle,
\end{equation*}
for any $Y\in T\Sigma$, i.e., $\nabla H=-Av^T$. Hence, for $Y, Z\in T\Sigma$,
\begin{align*}
\mathrm{Hess}\,H(Y, Z)=&\left\langle \nabla_{Y}\nabla H, Z\right\rangle=-\left\langle \nabla_{Y}Av^T, Z\right\rangle\\
=&-\left\langle \left(\nabla_{Y}A\right)v^T, Z\right\rangle- \left\langle A\nabla_{Y}v^T, Z\right\rangle.
\end{align*}
Using Codazzi's equation, we get
\begin{align*}
\mathrm{Hess}\,H(Y, Z)=&-\left\langle \left(\nabla_{v^T}A\right)Y, Z\right\rangle-\left\langle \overline\nabla_{Y}v^T, AZ\right\rangle\\
=&-\left\langle \left(\nabla_{v^T}A\right)Y, Z\right\rangle-\left\langle \overline{\nabla}_{Y}\left(v-H\nu\right), AZ\right\rangle\\
=&-\left\langle \left(\nabla_{v^T}A\right)Y, Z\right\rangle+H\left\langle \overline{\nabla}_{Y}\nu,AZ\right\rangle\\
=&-\left\langle \left(\nabla_{v^T}A\right)Y, Z\right\rangle-H\left\langle A^2Y,Z\right\rangle.
\end{align*}
Taking the trace and using the relation
\[
\ \mathrm{tr}\left(\nabla_{v^T}A\right)=\left\langle v^T, \nabla\left(\mathrm{tr}A\right)\right\rangle,
\]
we obtain \eqref{EqMeanCurv}.

As for the proof of \eqref{EqSimon}, we refer the reader to \cite{CMZ-Simon}, taking into account that the ambient space is $\mathbb{R}^{m+1}$ and $f=-\left\langle v, x\right\rangle$.

Finally, in order to obtain \eqref{EqSimonTraceless}, note that 
\begin{equation}\label{NormPhi2}
|\mathbf{\Phi}|^2=|\mathbf{A}|^2
-\frac{H^2}{m}.
\end{equation}
Hence, by \eqref{EqSimon} and \eqref{EqMeanCurv},
\begin{align*}
\frac{1}{2}\Delta_{f}|\mathbf{\Phi}|^2=&\frac{1}{2}\Delta_{f}|\mathbf{A}|^2-\frac{1}{2m}\Delta_{f}H^2\\
=&|\nabla\mathbf{A}|^2-|\mathbf{A}|^4-\frac{1}{m}\left(H\Delta_{f}H+|\nabla H|^2\right)\\
=&|\nabla\mathbf{A}|^2-\frac{1}{m}|\nabla H|^2-|\mathbf{A}|^2\left(|\mathbf{A}|^2-\frac{H^2}{m}\right)\\
=&|\nabla\mathbf{\Phi}|^2-|\mathbf{A}|^2|\mathbf{\Phi}|^2.
\end{align*}

\end{proof}

\section{Rigidity results}\label{SectRigRes}
The proof of Theorem \ref{MainA} relies on the following two lemmas. The first is a vanishing result proved in \cite{R1}, which adapts to the weighted setting a result originally obtained in \cite{PRS-JFA05}, \cite{PRS-Book}; see also \cite{PV1}.

\begin{lemma}[Theorem 8 in \cite{R1}]\label{Liouville}
Assume that on a weighted manifold $M_f$ the locally Lipschitz functions $u\geq 0$, $v>0$ satisfy
\begin{equation}\label{ineq_u}
\Delta_fu+a(x)u\geq0
\end{equation}
and
\begin{equation}\label{ineq_deltav}
\Delta_fv+\delta a(x)v\leq0,
\end{equation}
for some constant $\delta\geq1$ and $a(x)\in C^0(M)$. If $u\in L^{2\beta}\left(M_f\right)$, $1\leq\beta\leq \delta$, then there exists a constant $C\geq0$ such that 
\[
\ u^{\delta}=Cv.
\]
Furthermore,
\begin{enumerate}
	\item [(i)]If $\delta>1$ then $u$ is constant on $M$ and either $a\equiv 0$ or $u\equiv 0$;
	\item [(ii)]If $\delta=1$ and $u\not\equiv0$, $v$ and therefore $u^\delta$ satisfy \eqref{ineq_deltav} with equality sign.
\end{enumerate}
\end{lemma}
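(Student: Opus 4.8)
The plan is to adapt to the weighted setting the Caccioppoli/Picone argument of Fischer--Colbrie--Schoen and Pigola--Rigoli--Setti. The decisive first step is a change of unknowns that makes $u$ and $v$ into a sub-- and a super--solution of \emph{one and the same} Schr\"odinger--type operator. I would set
\[
\phi:=u^{\beta},\qquad \psi:=v^{\beta/\delta}.
\]
Since $\beta\geq1$ the map $t\mapsto t^{\beta}$ is convex and locally Lipschitz, while $\beta/\delta\in(0,1]$ makes $t\mapsto t^{\beta/\delta}$ concave and smooth on $(0,+\infty)$; the (distributional) chain rule applied to \eqref{ineq_u} and \eqref{ineq_deltav} then gives
\[
\Delta_{f}\phi+\beta a\,\phi\ \geq\ \tfrac{\beta-1}{\beta}\,\tfrac{|\nabla\phi|^{2}}{\phi}\ \geq\ 0,\qquad
\Delta_{f}\psi+\beta a\,\psi\ \leq\ -\tfrac{\delta-\beta}{\beta}\,\tfrac{|\nabla\psi|^{2}}{\psi}\ \leq\ 0,
\]
the gradient corrections carrying the favourable sign precisely because $1\leq\beta\leq\delta$. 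The mild regularity issue on the set $\{u=0\}$ (present only when $\beta<2$) is routine, e.g.\ via an $\varepsilon$--approximation $u\rightsquigarrow u+\varepsilon$; note also that a posteriori $u$ will be either $\equiv0$ or strictly positive, so the issue is ultimately vacuous.

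Now put $w:=\phi/\psi\geq0$. The Picone--type identity $\operatorname{div}_{f}(\psi^{2}\nabla w)=\psi\,\Delta_{f}\phi-\phi\,\Delta_{f}\psi$ rewrites, using the two inequalities above, as
\[
\operatorname{div}_{f}\!\bigl(\psi^{2}\nabla w\bigr)=\psi\bigl(\Delta_{f}\phi+\beta a\,\phi\bigr)-\phi\bigl(\Delta_{f}\psi+\beta a\,\psi\bigr)\ \geq\ 0,
\]
so $w$ is a nonnegative subsolution of the weighted operator $\operatorname{div}_{f}(\psi^{2}\nabla\,\cdot\,)$ and, crucially, the potential $a$ has disappeared. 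Testing this against $\rho^{2}w$ for a compactly supported Lipschitz cutoff $\rho$, integrating by parts against $e^{-f}d\mathrm{vol}_{M}$ and absorbing the cross term by Young's inequality produces the Caccioppoli estimate
\[
\int_{M}\rho^{2}\,\psi^{2}\,|\nabla w|^{2}\,e^{-f}d\mathrm{vol}_{M}\ \leq\ 4\int_{M}\phi^{2}\,|\nabla\rho|^{2}\,e^{-f}d\mathrm{vol}_{M}.
\]
This is where the hypothesis enters: $\phi=u^{\beta}\in L^{2}(M_{f})$ exactly because $u\in L^{2\beta}(M_{f})$, so taking an exhaustion $\rho=\rho_{R}$ with $\rho_{R}\equiv1$ on the geodesic ball $B_{R}$, $\operatorname{supp}\rho_{R}\subset B_{2R}$ and $|\nabla\rho_{R}|\leq c/R$ forces the right--hand side to $0$ as $R\to+\infty$. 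Hence $\psi|\nabla w|\equiv0$; as $\psi>0$ and $M$ is connected, $w\equiv C_{0}\geq0$, i.e.\ $u^{\beta}=C_{0}v^{\beta/\delta}$, which is $u^{\delta}=Cv$ with $C=C_{0}^{\delta/\beta}\geq0$. If $C=0$ then $u\equiv0$; otherwise $u,\phi,\psi,w$ are everywhere positive.

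To get (i) and (ii) I would not discard the gradient corrections. Keeping them throughout, the same passage to the limit $R\to+\infty$ yields, besides $w$ constant, that $\tfrac{\beta-1}{\beta}|\nabla\phi|^{2}\equiv0$ and $\tfrac{\delta-\beta}{\beta}|\nabla\psi|^{2}\equiv0$. For (i), assume $\delta>1$. If $u\equiv0$ there is nothing to prove; if $u\not\equiv0$, then at least one of the coefficients $\tfrac{\beta-1}{\beta}$, $\tfrac{\delta-\beta}{\beta}$ is strictly positive (the first if $\beta>1$, otherwise the second equals $\delta-1>0$), so the corresponding gradient vanishes identically, and combined with $w$ constant this makes \emph{both} $\phi$ and $\psi$ constant, hence $u$ constant. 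Since $\phi$ and $\psi$ are then positive constants, $\Delta_{f}\phi=\Delta_{f}\psi=0$, so the two inequalities reduce to $\beta a\phi\geq0$ and $\beta a\psi\leq0$, i.e.\ $a\equiv0$. For (ii), assume $\delta=1$ and $u\not\equiv0$; then necessarily $\beta=1$, $\phi=u$, $\psi=v$, $C>0$, and inserting $u=Cv$ into \eqref{ineq_u} gives $\Delta_{f}v+a\,v\geq0$, which together with \eqref{ineq_deltav} (recall $\delta a=a$) gives $\Delta_{f}v+a\,v=0$. Thus $v$, and hence $u^{\delta}=Cv$, satisfies \eqref{ineq_deltav} with the equality sign.

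The main obstacle, in my view, is the very first step: recognizing that $\phi=u^{\beta}$, $\psi=v^{\beta/\delta}$ is the right substitution, in that it simultaneously (a) conjugates the two inequalities to the single operator $\Delta_{f}+\beta a$, so that the Picone identity applies and the potential drops out of the key estimate, (b) generates gradient correction terms of exactly the sign needed both to close the Caccioppoli estimate and to harvest the rigidity conclusions (i)--(ii), and (c) matches the given integrability exponent $2\beta$ with the $L^{2}$ threshold demanded by the cutoff estimate. Once this substitution is found, the remainder is the standard Pigola--Rigoli--Setti machinery, the only genuinely technical point being the distributional chain rule near the zero set of $u$, handled as indicated above.
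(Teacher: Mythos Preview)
The paper does not contain its own proof of this lemma: it is quoted verbatim as Theorem~8 in \cite{R1}, which in turn adapts to the weighted setting the vanishing result of Pigola--Rigoli--Setti. So there is no ``paper's proof'' to compare against.

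Your argument is correct and is precisely the expected one. The substitution $\phi=u^{\beta}$, $\psi=v^{\beta/\delta}$ reducing both differential inequalities to the single operator $\Delta_f+\beta a$, the Picone identity eliminating the potential, and the Caccioppoli estimate exploiting $u^{\beta}\in L^{2}(M_f)$ is exactly the Pigola--Rigoli--Setti scheme carried over to the weighted Laplacian, which is what \cite{R1} does. The extraction of the rigidity statements (i)--(ii) by retaining the gradient correction terms $\tfrac{\beta-1}{\beta}|\nabla\phi|^{2}/\phi$ and $\tfrac{\delta-\beta}{\beta}|\nabla\psi|^{2}/\psi$ through the limit $R\to\infty$ is also standard and correctly handled; your case distinction ($\beta>1$ versus $\beta=1$ when $\delta>1$) is the right way to see that at least one of the two corrections is coercive. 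The only delicate technical point you flag yourself---the distributional chain rule for $u^{\beta}$ near $\{u=0\}$---is indeed routine via the $\varepsilon$--shift and, as you note, a posteriori irrelevant since $u$ ends up either identically zero or strictly positive.
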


The second preliminary result we need in the proof of Theorem \ref{MainA} is the following

\begin{lemma}\label{Lemma2}
Let $x:\Sigma^m\to\mathbb{R}^{m+1}$ be a $f$--stable translator with $H(p)\neq 0$ for some $p\in\Sigma$, and let $\omega\in C^{2}(\Sigma)$ be a positive solution of the stability equation
\[
\ \Delta_{f}\omega+|\mathbf{A}|^2\omega=0.
\]
If $|\mathbf{A}|\in L^2(\Sigma_{f})$, then there exists a constant $C\in\mathbb{R}\setminus\left\{0\right\}$ such that $H=C\omega$. In particular $|H|>0$.
\end{lemma}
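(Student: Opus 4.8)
The plan is to apply Lemma \ref{Liouville} with the choices $u=|\mathbf{A}|$, $v=\omega$, $a(x)=|\mathbf{A}|^2$, and $\delta=1$. First I would check the two differential inequalities. Since $\Sigma$ is $f$--stable and $\omega>0$ solves $\Delta_f\omega+|\mathbf{A}|^2\omega=0$, the inequality \eqref{ineq_deltav} holds with equality, so $v=\omega$ is admissible. For $u=|\mathbf{A}|$ I would use the refined Kato inequality together with the Simons-type identity \eqref{EqSimon}: from $\Delta_f|\mathbf{A}|^2=2|\nabla\mathbf{A}|^2-2|\mathbf{A}|^4$ and $|\mathbf{A}|\Delta_f|\mathbf{A}|=\frac12\Delta_f|\mathbf{A}|^2-|\nabla|\mathbf{A}||^2$, one gets $|\mathbf{A}|\Delta_f|\mathbf{A}|=|\nabla\mathbf{A}|^2-|\nabla|\mathbf{A}||^2-|\mathbf{A}|^4\ge -|\mathbf{A}|^4$ (wherever $|\mathbf{A}|>0$, using $|\nabla\mathbf{A}|\ge|\nabla|\mathbf{A}||$), hence $\Delta_f|\mathbf{A}|+|\mathbf{A}|^2\cdot|\mathbf{A}|\ge0$ in the weak (locally Lipschitz) sense — this is exactly \eqref{ineq_u} with the stated $a(x)$. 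The hypothesis $|\mathbf{A}|\in L^2(\Sigma_f)$ is precisely $u\in L^{2\beta}$ with $\beta=1=\delta$.

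Lemma \ref{Liouville} then yields a constant $C_0\ge0$ with $|\mathbf{A}|=C_0\,\omega$. The point $p$ with $H(p)\neq0$ forces $|\mathbf{A}|(p)\ge |H(p)|/\sqrt m>0$, so $C_0\neq0$ and $|\mathbf{A}|>0$ everywhere (since $\omega>0$). In particular $|\mathbf{A}|$ is a positive solution of $\Delta_f|\mathbf{A}|+|\mathbf{A}|^2\,|\mathbf{A}|=0$, i.e. the equalities in the Kato estimate must hold: $|\nabla\mathbf{A}|\equiv|\nabla|\mathbf{A}||$.

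Next I would bring in the mean curvature. By \eqref{EqMeanCurv}, $H$ itself satisfies $\Delta_f H+|\mathbf{A}|^2H=0$; the same holds for $|H|$ on the open set where $H\neq0$, and in fact $|H|$ is a nonnegative weak solution of $\Delta_f|H|+|\mathbf{A}|^2|H|\ge0$ globally (the usual Kato-type argument, here with no gradient term surplus beyond what vanishes). Since $|\mathbf{A}|\ge|H|/\sqrt m$ we have $|H|\in L^2(\Sigma_f)$ as well, so Lemma \ref{Liouville} applies again to the pair $u=|H|$, $v=\omega$, giving $|H|=c\,\omega$ for some constant $c\ge0$; because $H(p)\neq0$, $c>0$, so $|H|>0$ on all of $\Sigma$ and $H$ has a fixed sign. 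Combining, $H=\pm\sqrt m\, c\,\omega/C_0\cdot(\text{sign})$... more cleanly: $|H|=c\omega$ and $|\mathbf{A}|=C_0\omega$ both hold, $H$ never vanishes, so $H=\varepsilon c\omega$ with $\varepsilon=\pm1$ the (constant) sign of $H$, and setting $C=\varepsilon c\neq0$ gives $H=C\omega$. That $|H|>0$ is then immediate.

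The main obstacle is establishing the differential inequality \eqref{ineq_u} for $u=|\mathbf{A}|$ with the precise coefficient $a(x)=|\mathbf{A}|^2$ and controlling the locus $\{|\mathbf{A}|=0\}$: away from that set the computation above is exact, and on it one must argue that the distributional inequality still holds (standard, since $|\mathbf{A}|$ is locally Lipschitz and the bad set has the right structure, or by the usual $\varepsilon$-regularization $\sqrt{|\mathbf{A}|^2+\varepsilon}$). A secondary subtlety is making sure the sign of $H$ is globally constant before writing $H=C\omega$; this is handled by the second application of Lemma \ref{Liouville} to $|H|$, which forces $H$ to be nowhere zero.
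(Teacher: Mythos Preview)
Your argument is correct but follows a genuinely different route from the paper. The paper does \emph{not} invoke Lemma~\ref{Liouville} here at all: it adapts the direct integral argument of Colding--Minicozzi (Lemma~9.25 in \cite{CoMi}). Concretely, the paper first uses $|H|\le 1$ and $|\mathbf{A}|\in L^2(\Sigma_f)$ to justify integrating \eqref{EqMeanCurv} by parts, obtaining $\int H^2|\mathbf{A}|^2 e^{-f}=\int |\nabla H|^2 e^{-f}$; then a cutoff argument shows $H|\nabla\log\omega|\in L^2(\Sigma_f)$, which legitimizes a second integration by parts with $u=H^2$, $v=\log\omega$, and one arrives at $\int|\nabla H-H\nabla\log\omega|^2 e^{-f}=0$, whence $H/\omega$ is constant. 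By contrast, you feed $u=|H|$ (and, redundantly, $u=|\mathbf{A}|$) into the abstract Liouville-type Lemma~\ref{Liouville}. Your approach is shorter and perfectly valid once one accepts the distributional Kato inequality $\Delta_f|H|\ge \operatorname{sgn}(H)\,\Delta_f H$; the paper's approach is more hands-on and avoids having to pass through $|H|$ (it works with $H$ directly, so the sign issue never arises). Note also that your first application of Lemma~\ref{Liouville} to $|\mathbf{A}|$ is unnecessary for the present lemma---only the application to $|H|$ is used in your conclusion---and in fact the paper deploys exactly that $|\mathbf{A}|$-step later, in the proof of Theorem~\ref{MainA}, not here.
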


\begin{proof}
We follow the argument used in the proof of Lemma 9.25 in \cite{CoMi}. \\
First of all, note that the existence of $\omega$ is guaranteed by the $f$--stability of $\Sigma$, using the weighted version of a result by D. Fischer--Colbrie and R. Schoen, \cite{FCS}, (see e.g. Proposition 3 in \cite{IR}). Moreover, recall that, if $u, v\in C^{2}(\Sigma)$ satisfy
\begin{equation}\label{IntConduv}
\int_{\Sigma}\left(|u\nabla v|+|\nabla u||\nabla v|+|u\Delta_{f}v|\right)e^{-f}d\mathrm{vol}_{\Sigma}<+\infty,
\end{equation} 
then
\[
\ \int_{\Sigma}u\Delta_{f}v\,e^{-f}d\mathrm{vol}_{\Sigma}=-\int_{\Sigma}\left\langle\nabla u, \nabla v\right\rangle e^{-f}d\mathrm{vol}_{\Sigma}.
\]
Since $|\mathbf{A}|\in L^2(\Sigma_{f})$, we have that $H\in W^{1,2}(\Sigma_{f})$. Furthermore, it follows by \eqref{TranslEq}, that $|H|\leq 1$, and thus
\[
\ |H\Delta_{f}H|=|\mathbf{A}|^2H^2\leq |\mathbf{A}|^2.
\]
In particular, Equation \eqref{IntConduv} is satisfied for $u=v=H$, and we get that
\begin{equation}\label{IntCondHA}
\int_{\Sigma}H^2|\mathbf{A}|^2e^{-f}d\mathrm{vol}_{\Sigma}=-\int_{\Sigma}H\Delta_{f}He^{-f}d\mathrm{vol}_{\Sigma}=\int_{\Sigma}|\nabla H|^2
e^{-f}d\mathrm{vol}{\Sigma}.
\end{equation}
Let us now prove that $H|\nabla \log \omega|\in L^{2}(\Sigma_{f})$. Given a  compactly supported function $\eta$ on $\Sigma$ it holds that
\begin{align*}
\ \int_{\Sigma}\left\langle\nabla \eta^2, \nabla \log\omega \right\rangle e^{-f}d\mathrm{vol}_{\Sigma}&=-\int_{\Sigma}\eta^{2}\Delta_{f}\log\omega \,e^{-f}d\mathrm{vol}_{\Sigma}\\
&=\int_{\Sigma}\eta^2\left(|\mathbf{A}|^2+|\nabla\log\omega|^2\right)e^{-f}d\mathrm{vol}_{\Sigma}.
\end{align*}
On the other hand 
\[
\ \int_{\Sigma}\left\langle\nabla \eta^2, \nabla\log \omega\right\rangle e^{-f}d\mathrm{vol}_{\Sigma}\leq\int_{\Sigma}\left[\frac{\eta^2}{2}|\nabla \log\omega|^2+2|\nabla \eta|^2\right]e^{-f}d\mathrm{vol}_{\Sigma}.
\]
Inserting this in the above identity yields
\[
\ \int_{\Sigma}\eta^2\left(|\mathbf{A}|^2+|\nabla\log\omega|^2\right)e^{-f}d\mathrm{vol}_{\Sigma}\leq 4\int_{\Sigma}|\nabla\eta|^2e^{-f}
d\mathrm{vol}_{\Sigma}.
\]
Now let
\[
\ \eta_{j}=
\begin{cases}
1&\mathrm{on}\,\, B_{j}(o)\\
j+1-r(x)&\mathrm{on}\,\,B_{j+1}(o)\setminus B_{j}(o)\\
0&\mathrm{on}\,\,\Sigma\setminus B_{j+1}(o),
\end{cases}
\]
where $o\in\Sigma$ is a fixed reference point and $r$ is the distance function from $o$. Setting $\eta=\eta_{j}\,H$, we have that
\begin{align*}
\int_{\Sigma}\eta^2\left(|\mathbf{A}|^2+|\nabla\log\omega|^2\right)e^{-f}d\mathrm{vol}_{\Sigma}\leq&4\int_{\Sigma}|\nabla(\eta_{j}H)|^2e^{-f}d\mathrm{vol}_{\Sigma}\\
\leq&8\int_{\Sigma}H^2|\nabla\eta_{j}|^2e^{-f}d\mathrm{vol}_{\Sigma}+8\int_{\Sigma}\eta_{j}^2|\nabla H|^2 e^{-f}d\mathrm{vol}_{\Sigma}\\
\leq&8\int_{\Sigma}\left(H^2+|\nabla H|^2\right)e^{-f}d\mathrm{vol}_{\Sigma}<+\infty.
\end{align*}
Taking the limit as $j\to+\infty$ and using the dominated convergence theorem, we obtain
\[
\ \int_{\Sigma}H^2|\nabla\log\omega|^2e^{-f}d\mathrm{vol}_{\Sigma}\leq\int_{\Sigma}H^2\left(|\mathbf{A}|^2+|\nabla \log \omega|^2\right)e^{-f}d\mathrm{vol}_{\Sigma}<+\infty.
\]
Hence, using H\"older's inequality, we get
\begin{align*} \int_{\Sigma}H^2|\nabla\log\omega|e^{-f}d\mathrm{vol}_{\Sigma}\leq&\left(\int_{\Sigma}H^2e^{-f}d\mathrm{vol}_{\Sigma}\right)^{\frac{1}{2}}\left(\int_{\Sigma}H^2|\nabla\log\omega|^2e^{-f}d\mathrm{vol}_{\Sigma}\right)^{\frac{1}{2}}<+\infty,\\
\int_{\Sigma}|\nabla H^2||\nabla\log\omega|e^{-f}d\mathrm{vol}_{\Sigma}=&2\int_{\Sigma}|H||\nabla H||\nabla \log \omega|e^{-f}d\mathrm{vol}_{\Sigma}\\
\leq&\left(\int_{\Sigma}|\nabla H|^2e^{-f}d\mathrm{vol}_{\Sigma}\right)^{\frac{1}{2}}\left(\int_{\Sigma}H^2|\nabla\log\omega|^2e^{-f}d\mathrm{vol}_{\Sigma}\right)^{\frac{1}{2}}\\
<&+\infty.
\end{align*}
Finally,
\begin{equation*}
\int_{\Sigma}|H^2\Delta_{f}\log\omega|e^{-f}d\mathrm{vol}_{\Sigma}\leq\int_{\Sigma}H^2\left(|\mathbf{A}|^2+|\nabla\log\omega|^2|\right)e^{-f}d\mathrm{vol}_{\Sigma}<+\infty.
\end{equation*}
Hence \eqref{IntConduv} is satisfied with the choices $u=H^2$, $v=\log \omega$ and we deduce that
\begin{align*}
\int_{\Sigma}\left\langle\nabla H^2, \nabla\log\omega\right\rangle\,e^{-f}d\mathrm{vol}_{\Sigma}=&-\int_{\Sigma}H^2\Delta_{f}\log\omega\,e^{-f}d\mathrm{vol}_{\Sigma}\\
=&\int_{\Sigma}\left(H^2|\mathbf{A}|^2+H^2|\nabla\log\omega|^2\right)\,e^{-f}d\mathrm{vol}_{\Sigma}.
\end{align*}
Substituting \eqref{IntCondHA} in the previous equation we get
\begin{align*}
0=&\int_{\Sigma}\left(|\nabla H|^2+H^2|\nabla\log\omega|^2-\left\langle\nabla H^2, \nabla \log\omega\right\rangle\right)e^{-f}d\mathrm{vol}_{\Sigma}\\
=&\int_{\Sigma}|\nabla H-H\nabla\log\omega|^2e^{-f}d\mathrm{vol}_{\Sigma}.
\end{align*}
Hence $\nabla H= H\,\nabla\log\omega$, implying that $\frac{H}{\omega}$ has to be constant.
\end{proof}

\begin{proof}(of Theorem \ref{MainA})
By \eqref{EqSimon} we have that
\begin{equation}\label{EqVanu}
|\mathbf{A}|(\Delta_f+|\mathbf{A}|^2)|\mathbf{A}|=|\nabla\mathbf{A}|^2-|\nabla|\mathbf{A}||^2\geq0.
\end{equation}
Moreover, since $\Sigma$ is $f$--stable, there exists a positive $C^2$ solution $\omega$ of
\begin{equation}\label{EqVanv}
\Delta_f\omega+|\mathbf{A}|^2\omega=0.
\end{equation}
We can then apply Lemma \ref{Liouville} with the choices $u=|\mathbf{A}|$, $v=\omega$, $a(x)=|\mathbf{A}|^2$ and $\delta=\beta=1$, to deduce that
\begin{equation}\label{Rel1}
|\mathbf{A}|=C_1\omega,
\end{equation}
for some constant $C_1\geq 0$, and that either $|A|\equiv0$ and $\Sigma$ is necessarily a translator hyperplane, or
\begin{equation}\label{Rel2}
|\nabla \mathbf{A}|^2=|\nabla|\mathbf{A}||^2.
\end{equation}
In case $H\equiv 0$, combining \eqref{Rel2} with Lemma 10.2 in \cite{CoMi}, we get the constancy of $|\mathbf{A}|$. Hence, by \eqref{EqSimon}, $|\mathbf{A}|\equiv 0$, but we have already analyzed this situation. Therefore, by Lemma \ref{Lemma2}, we know that $|H|>0$.

Moreover, combining Lemma \ref{Lemma2} with \eqref{Rel1}, we have that
\begin{equation}\label{Rel3}
|\mathbf{A}|=C_2H,
\end{equation}
for some $C_2\in\mathbb{R}$. Note that the desired conclusion could now be obtained as a direct consequence of Theorem B in \cite{MHSS}. However, for the sake of completeness, we prefer to provide here a direct argument based on the proof of Theorem 10.1 in \cite{CoMi} (see also \cite{Mant}).

Equations \eqref{Rel2} and \eqref{Rel3} are the key geometric identities to prove our assertion. Indeed, let $p\in\Sigma$, $\{e_i\}_{i=1}^{m}$ an orthonormal frame in $T_p\Sigma$ that diagonalizes $\mathbf{A}_{p}$, then
\[
\ \mathbf{A}(e_{i}, e_{j})=\lambda_{i}\delta_{ij}.
\]
Reasoning as in \cite{CoMi}, we hence have by \eqref{Rel2} that
\begin{itemize}
	\item[(i)] For every $k$, there exists $\alpha_{k}$ such that $\nabla_{e_{k}}\left(\mathbf{A}(e_{i}, e_{i})\right)=\alpha_{k}\lambda_{i}$, $i=1\ldots,m$.
	\item[(ii)] If $i\neq j$ then $\nabla_{e_k}\left(\mathbf{A}(e_{i}, e_{j})\right)=0$. 
\end{itemize}
Since $\nabla\mathbf{A}$ is fully symmetric, by Codazzi's equations (ii) implies
\begin{itemize}
	\item[($\tilde{\mathrm{ii}}$)]$\nabla_{e_{k}}\mathbf{A}(e_{i}, e_{j})=0$ unless $i=j=k$. 
\end{itemize}
If $\lambda_{i}\neq0$ and $j\neq i$, then $0=\nabla_{e_{j}}\mathbf{A}(e_{i}, e_{i})=\alpha_{j}\lambda_{i}$, so that $\alpha_{j}=0$.

In particular, if $\mathrm{rk}(A_{p})\geq 2$, then $\alpha_{j}=0$ for every $j$, and thus, by (i), $\left(\nabla\mathbf{A}\right)_{p}=0$.

We now consider two cases depending on the nature of $Ker(\mathbf{A})$.

\textbf{Case 1.} \textit{$Ker\,\mathbf{A}$ is empty everywhere}. $\mathbf{A}$ must have rank at least two everywhere as $m\geq 2$ and, by the reasoning above, $\nabla\mathbf{A}\equiv 0$ on $\Sigma$. According to a theorem by Lawson, \cite{La}, we get that $\Sigma$ should be isometric to $\mathbb{S}^{k}(r)\times\mathbb{R}^{m-k}$ for some $r\geq0$ and $k=2,\ldots,m$. However, none of these examples is a translator.
	\medskip
	
\textbf{Case 2.} \textit{$Ker \mathbf{A}$ is non-empty at some $p\in\Sigma$}. Let $v_{1}(p)\ldots, v_{m-k}(p)\in T_{p}\Sigma\subset\mathbb{R}^{m+1}$ be a family of unit orthonormal tangent vectors spanning such $(m-k)$--dimensional kernel. Then the geodesic $\gamma$ from $p$ in $\Sigma$ with initial velocity $v_{l}(p)$ satisfies
\[
\ \nabla_{\dot{\gamma}}\left(\mathbf{A}(\dot{\gamma}, \cdot)\right)=\frac{1}{H}\left\langle\nabla H, \dot{\gamma}\right\rangle\mathbf{A}(\dot{\gamma}, \cdot),
\]
and by Gronwall's lemma $\mathbf{A}(\dot{\gamma}(s),\cdot)(\gamma(s))=0$ for all $s\in\mathbb{R}$. 

Since $\gamma$ is a geodesic in $\Sigma$ the normal to the curve in $\mathbb{R}^{m+1}$ is also normal to $\Sigma$. Hence, letting $\kappa$ be the curvature of $\gamma$ on $\mathbb{R}^{m+1}$, we have that $\kappa=\left\langle\nu, \frac{d}{ds}\dot\gamma\right\rangle=\mathbf{A}(\dot{\gamma}, \dot{\gamma})=0$, and thus $\gamma$ is a straight line in $\mathbb{R}^{m+1}$. In particular, the whole $(m-k)$--dimensional affine subspace $p+S(p)\subset\mathbb{R}^{m+1}$ is contained in $\Sigma$, where we set $S(p)=\left\langle v_{1}(p), \ldots, v_{m-k}(p)\right\rangle\subset\mathbb{R}^{m+1}$. 

Let now $\sigma$ be a geodesic from $p$ to another point $q\in\Sigma$, parametrized by arc-length and extend by parallel transport the vectors $v_{l}$ along $\sigma$. Then
\[
\ \nabla_{\dot{\sigma}}\left(\mathbf{A}(v_{l}, \cdot)\right)=\frac{1}{H}\left\langle\nabla H, \dot{\sigma}\right\rangle\mathbf{A}(v_{l}, \cdot).
\]
Again by Gronwall's lemma we get that $\mathbf{A}(v_{l}(s), \cdot)=0$ for all $s\in\mathbb{R}$, and in particular $v_{l}(q)$ is contained in the kernel of $\mathbf{A}$ at $q\in \Sigma$. Hence the kernel of $\mathbf{A}$ has constant dimension $m-k$ with $0<k<m$ (as $|H|>0$) at every $q\in \Sigma$ and all the affine $(m-k)$--dimensional subspaces $q+S(q)\subset\mathbb{R}^{m+1}$ are contained in $\Sigma$. Since also $rk(\mathbf{A})$ is constant on $\Sigma$ we conclude as follows.

If $\mathrm{rk}(\mathbf{A})\equiv 1$, then $\Sigma$ is invariant under the isometric translations in the $(m-1)$--dimensional subspace spanned by a global orthonormal frame for $\mathrm{Ker}(\mathbf{A})$. Therefore $\Sigma$ is a product of a curve $\Gamma\subset\mathbb{R}^2$ and this $(m-1)$--dimensional subspace. The curve $\Gamma$ has to be a smooth translator curve in $\mathbb{R}^2$ with $|\mathbf{H}|>0$, hence the grim reaper. Thus $\Sigma$ should be isometric to the grim reaper cylinder. On the other hand our integrability assumption $|\mathbf{A}|\in L^2(\Sigma_f)$ is not met by this hypersurface, leading to a contradiction. 

If $rk(\mathbf{A})\equiv c\geq 2$ we get $\nabla\mathbf{A}\equiv 0$ on $\Sigma$ and hence a contradiction reasoning exactly as in Case 1.
\end{proof}

\begin{proof}(of Theorem \ref{MainA1}) A simple application of the maximum principle to \eqref{EqMeanCurv} gives that either $|H|>0$ or $H\equiv 0$. In this latter case $|\mathbf{A}|=|\mathbf{\Phi}|\in L^{2}(\Sigma_f)$ and, reasoning as in Section \ref{intro}, we get that $\Sigma$ is a translator hyperplane. Therefore, assume by now that $H>0$. The case $H<0$ can be treated exactly in the same way substituting $-H$ to $H$ in the reasoning below. By \eqref{EqSimonTraceless} we have that
 \begin{equation}\label{EqVanPhi}
  |\mathbf{\Phi}|\left[\Delta_f|\mathbf{\Phi}|+|\mathbf{A}|^2||\mathbf{\Phi}|\right]\geq 0.
 \end{equation}
Hence we can apply Lemma \ref{Liouville} as in the proof of the previous theorem, with the choices $u=|\mathbf{\Phi}|$, $v=H$, $a(x)=|\mathbf{A}|^2$ and $\delta=\beta=1$. We thus deduce that
\[
 |\mathbf{\Phi}|=DH,
\]
for some constant $D\geq 0$, and hence, by \eqref{NormPhi2},
\begin{equation}\label{Rel4}
 |\mathbf{A}|=\sqrt{D^2+\frac{1}{m}}H.
\end{equation}
Combining \eqref{Rel4} and \eqref{EqMeanCurv}, we deduce that \eqref{Rel2} holds. The conclusion follows now reasoning as in the proof of Theorem \ref{MainA} and noting that also the assumption $|\mathbf{\Phi}|\in L^2(\Sigma_f)$ is not met by a grim reaper cylinder.
\end{proof}

\section{Weighted Sobolev inequalities for translators}\label{SectSobIneq}

The aim of this section is to obtain the validity of a weighted $L^1$ Sobolev inequality on translators of the mean curvature flow. This will be essential in the proof of the topological results in Section \ref{SectTopCons}.

\medskip
The first result establishes the validity on translators (without any further assumption) of the following $(m+1)$--dimensional weighted $L^1$ Sobolev inequality. An essential tool in the proof, as we shall see, is the bijective correspondence found out by K. Smoczyk, \cite{Smoc}, between translators and minimal hypersurfaces in a suitable warped product.

\begin{lemma}\label{Thm_WeightSobL1}
Let $x:\Sigma^m\to \mathbb{R}^{m+1}$ be a translator.
Let $h$ be a non--negative compactly supported $C^1$ function on $\Sigma$. Then
\begin{equation}\label{SobL1}
\left[\int_{\Sigma}h^{\frac{m+1}{m}}e^{-f}d\mathrm{vol}_{\Sigma}\right]^{\frac{m}{m+1}}
\leq C\int_{\Sigma}|\nabla h|e^{-f}d\mathrm{vol}_{\Sigma}.
\end{equation}
\end{lemma}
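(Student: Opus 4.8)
The plan is to exploit Smoczyk's correspondence to reduce the weighted Sobolev inequality on $\Sigma$ to the classical Michael--Simon Sobolev inequality on a minimal hypersurface in a warped product. Recall (Smoczyk, \cite{Smoc}) that $x:\Sigma^m\to\mathbb{R}^{m+1}$ is a translator (with $v=e_{m+1}$, $f=-\langle x,v\rangle$) if and only if the associated map $\hat x:\Sigma^m\to N^{m+2}:=\mathbb{R}^{m+1}\times\mathbb{R}$, $\hat x=(x,0)$ (or, more precisely, the graph-type lift), is a minimal hypersurface with respect to the warped product metric $\bar g = e^{\frac{2}{m}x_{m+1}}\big(g_{\mathbb{R}^{m+1}}+dt^2\big)$ on $N$, where $x_{m+1}=\langle x,v\rangle=-f$. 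The first step is to write this correspondence down carefully: record that the induced metric $\hat g$ on $\Sigma$ from $(N,\bar g)$ satisfies $\hat g = e^{\frac{2}{m}x_{m+1}}\, g_\Sigma = e^{-\frac{2}{m}f}\, g_\Sigma$, so that the Riemannian volume measures are related by $d\mathrm{vol}_{\hat g} = e^{-f}\, d\mathrm{vol}_\Sigma$; this is exactly the weight we want. Likewise, gradients transform by $|\nabla^{\hat g}h|_{\hat g} = e^{\frac{1}{m}f}|\nabla h|_g$, so $|\nabla^{\hat g} h|_{\hat g}\, d\mathrm{vol}_{\hat g} = e^{-\frac{m-1}{m}f}|\nabla h|_g\, d\mathrm{vol}_\Sigma$.

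The second step is to invoke the Michael--Simon Sobolev inequality on the minimal hypersurface $(\Sigma,\hat g)\hookrightarrow(N,\bar g)$. Since $N$ is not Euclidean, I would use the version of the Michael--Simon inequality valid for hypersurfaces in Riemannian manifolds whose sectional curvature is bounded above and whose injectivity radius is bounded below — or, better, appeal directly to Hoffman--Spruck, which gives a Sobolev inequality on a minimal submanifold of a Cartan--Hadamard manifold (or more generally under bounded geometry), in the form
\[
\Big[\int_\Sigma h^{\frac{m}{m-1}}\,d\mathrm{vol}_{\hat g}\Big]^{\frac{m-1}{m}}\le C\int_\Sigma |\nabla^{\hat g}h|_{\hat g}\,d\mathrm{vol}_{\hat g}
\]
for $h\in C^1_c$. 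But note the exponent here is the $m$-dimensional Sobolev exponent $\frac{m}{m-1}$, whereas the claimed inequality \eqref{SobL1} has the $(m+1)$-dimensional exponent $\frac{m+1}{m}$. This is the crucial point: the inequality we want is the one for the ambient dimension $m+1$, not the intrinsic dimension $m$. The resolution is that $\Sigma$, viewed inside $N^{m+2}$, is a hypersurface of a manifold of dimension $m+2$, and more to the point, one should apply the Michael--Simon/Hoffman--Spruck inequality to the $(m+1)$-dimensional minimal submanifold obtained by taking the product $\Sigma\times\mathbb{R}$ (translating along the extra $\mathbb{R}$-factor, which is an isometry of $\bar g$ only after noting $\bar g$ is independent of $t$) — equivalently, one applies the co-area / slicing argument to a suitable $(m+1)$-dimensional minimal submanifold whose slices are copies of $\Sigma$. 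Concretely: $\Sigma\times\mathbb{R}\subset N\times\mathbb{R}$... I would instead use directly that $\{(x,0):x\in\Sigma\}$ sits in $N$ but apply the Sobolev inequality for minimal submanifolds of dimension $m$ with the observation that composing with the warping raises the effective dimension by one through the conformal factor $e^{-f}=e^{\frac{2}{m}x_{m+1}}$; this is the content of the scaling $\big(\int h^{\frac{m+1}{m}}e^{-f}\big)^{\frac{m}{m+1}}$ versus $\big(\int h^{\frac{m}{m-1}}\big)^{\frac{m-1}{m}}$ after absorbing powers of $e^{-f}$ via Hölder.

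The third step — which I expect to be the main obstacle — is precisely this dimension-shift bookkeeping: one must verify that the Hoffman--Spruck constant for the minimal hypersurface $(\Sigma,\hat g)$ in $(N,\bar g)$ can be taken uniform (independent of $\Sigma$), which requires a bound on the geometry of $(N,\bar g)$ — but $\bar g=e^{\frac2m x_{m+1}}g_{\text{eucl}}$ has sectional curvatures that blow up as $x_{m+1}\to-\infty$, so $N$ does not have bounded geometry globally. The fix is that the relevant Sobolev inequality for minimal submanifolds (in the Allard/Michael--Simon spirit) only needs the ambient metric to be, say, complete with a mean-curvature-type term, and for minimal submanifolds the ambient curvature enters only through an error term that is itself controlled by $h$ and the very conformal factor appearing in $d\mathrm{vol}_{\hat g}$; carefully, the inequality one gets is
\[
\Big[\int_\Sigma h^{\frac{m+1}{m}}\,e^{-f}d\mathrm{vol}_\Sigma\Big]^{\frac{m}{m+1}}\le C\int_\Sigma\big(|\nabla h| + h\,|\mathbf{H}^N|\big)e^{-f}d\mathrm{vol}_\Sigma,
\]
and since $\hat x$ is \emph{minimal} in $(N,\bar g)$ the mean curvature term $\mathbf{H}^N$ vanishes identically, leaving exactly \eqref{SobL1}. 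Thus the whole argument is: (a) record the conformal change of metric/volume/gradient under Smoczyk's map; (b) observe $\hat x$ is minimal in $(N,\bar g)$; (c) apply the Michael--Simon--Hoffman--Spruck Sobolev inequality in the ambient dimension $m+2$ (giving intrinsic Sobolev exponent $\frac{m+1}{m}$ for the $(m+1)$-dimensional object, or equivalently for $\Sigma$ after the conformal rescaling) with vanishing mean-curvature error term; (d) translate back through $d\mathrm{vol}_{\hat g}=e^{-f}d\mathrm{vol}_\Sigma$ and $|\nabla^{\hat g}h|_{\hat g}d\mathrm{vol}_{\hat g} = e^{-\frac{m-1}{m}f}|\nabla h|\,d\mathrm{vol}_\Sigma$, using Hölder on the leftover powers of $e^{-f}$ to land on exactly the stated exponents. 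The delicate part throughout is making sure the constant $C$ is universal (depending only on $m$), which follows because Smoczyk's warped product $N$ is a \emph{fixed} ambient manifold and minimality kills the only term that could have introduced $\Sigma$-dependence.
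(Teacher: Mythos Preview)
Your overall strategy (Smoczyk's correspondence $+$ Hoffman--Spruck on a minimal hypersurface in a nonpositively curved ambient) is exactly the paper's. But there is a genuine gap in how you handle the dimension shift. You try to work with $\Sigma$ itself, equipped with a conformally rescaled metric $\hat g=e^{-\frac{2}{m}f}g_\Sigma$, viewed inside an $(m+2)$--dimensional ambient. That gives the $m$--dimensional Sobolev exponent $\tfrac{m}{m-1}$, and when you translate back you obtain $|\nabla^{\hat g}h|_{\hat g}\,d\mathrm{vol}_{\hat g}=e^{f/m}|\nabla h|\,e^{-f}d\mathrm{vol}_\Sigma$. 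Passing from this to $\int|\nabla h|\,e^{-f}$ requires a uniform bound on $e^{f/m}$, i.e.\ $\langle x,v\rangle$ bounded below --- precisely the halfspace hypothesis you do \emph{not} have here. No amount of H\"older will remove that extra factor; the ``leftover powers of $e^{-f}$'' go the wrong way. (This conformal route is in fact what the paper uses for its \emph{other} Sobolev lemma, the one with exponent $\tfrac{m}{m-1}$, and that lemma carries exactly the halfspace assumption for exactly this reason.)

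The clean way to get the exponent $\tfrac{m+1}{m}$ --- which you nearly write down and then abandon --- is to take the \emph{warped product} $\hat N=\mathbb{R}^{m+1}\times_{e^{-f}}\mathbb{T}$ with metric $\langle\,,\,\rangle_{\mathbb{R}^{m+1}}+e^{-2f}dt^2$, and let $\hat\Sigma=\Sigma\times\mathbb{T}$. Then $\hat\Sigma$ is an $(m+1)$--dimensional \emph{hypersurface} in the $(m+2)$--dimensional $\hat N$; Smoczyk shows it is minimal, and a direct curvature computation shows $\hat N$ has nonpositive sectional curvature, so Hoffman--Spruck applies with the correct exponent $\tfrac{m+1}{m}$. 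Because the warping is only in the $\mathbb{T}$--direction, for $\hat h(x,t)=h(x)$ one has $|\nabla_{\hat\Sigma}\hat h|=|\nabla_\Sigma h|$ exactly (no conformal factor on the gradient), while $d\mathrm{vol}_{\hat\Sigma}=e^{-f}d\mathrm{vol}_\Sigma\,dt$ with $\mathrm{vol}(\mathbb{T})=1$. The weighted inequality \eqref{SobL1} then drops out with no H\"older step and no halfspace assumption. Your worry about unbounded ambient geometry is an artifact of the conformal picture; in the warped product the sectional curvatures are globally $\le 0$.
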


\begin{proof}
Let $x:\Sigma^m \to \mathbb{R}^{m+1}$ be a translator and let $f(p)=-\left\langle p,v\right\rangle$. We consider the warped product $\hat{N}=\mathbb{R}^{m+1}\times_{e^{-f}}\mathbb{T}$, where  $\mathbb{T}=\mathbb{T}^{1}=\mathbb{R}/\mathbb{Z}$, so that $\mathrm{vol}(\mathbb{T})=1$. Recall that the warped product metric is given by $g_{\hat{N}}=\left\langle\,,\,\right\rangle_{\mathbb{R}^{m+1}}+e^{-2f}dt^2$. In the rest of the proof $\{E_{i}\}_{i=1}^{m+2}$ will be an orthonormal frame at $(p, t)\in \hat{M}$ such that $\{E_{j}\}_{j=1}^{m+1}$ is a local orthonormal frame at $p\in \mathbb{R}^{m+1}$ and $E_{m+2}=e^{f}\frac{\partial}{\partial t}\in T_{t}\mathbb{T}$. Furthermore, we will denote by $\overline{(\cdot)}$ and by $\hat{(\cdot)}$ respectively, the geometric quantities of $\mathbb{R}^{m+1}$ and of $\hat{N}$.
It was proved in \cite{Smoc} (see also \cite{AS}) that, letting
\[
 \hat{x}:\hat{\Sigma}:=\Sigma\times\mathbb{T}\to\hat{N},
\]
it holds that
\[
 \mathbf{H}_{\hat{\Sigma}}(p, t)=(\mathbf{H}_{\Sigma}(p)+\overline{\nabla}f(p), 0).
\]
In particular $\Sigma$ is a translator if and only if $\tilde{\Sigma}$ is a complete minimal hypersurface in $\hat{N}$. Moreover, a direct computation (see e.g. Proposition 2.2. in \cite{Smoc}) shows that the Riemann curvature tensor of $\hat{N}$ satisfies
\begin{align*}
 \,\hat{R}_{ijkl}=&\overline{R}_{ijkl}\equiv 0,\\
 \,\hat{R}_{m+2\,j\,m+2\,j}=&\,\overline{\mathrm{Hess}}f(E_{i}, E_{i})-\left\langle\,\overline{\nabla} f, E_{i}\right\rangle^2,\\
 =&-\left\langle\,\overline{\nabla} f, E_{i}\right\rangle^2\leq 0,\\
 \,\hat{R}_{m+2\,m+2\,m+2\,m+2}=&0,
\end{align*}
with $i,j,k,l=1,\ldots,m+1$.
In particular, $\hat{N}$ is a Cartan--Hadamard manifold and, using a result of D. Hoffmann and J. Spruck, \cite{HoSp}, we 
deduce that on the minimal hypersurface $\hat{\Sigma}$ the $L^1$--Sobolev inequality
\[
 \left[\int_{\hat{\Sigma}}\hat{h}^{\frac{m+1}{m}}d\mathrm{vol}_{\hat{\Sigma}}\right]^{\frac{m}{m+1}}\leq C\int_{\hat{\Sigma}}|\nabla \hat{h}|d\mathrm{vol}_{\hat{\Sigma}}\quad\quad\forall\,0\leq\hat{h}\in C_{c}^{1}(\hat{\Sigma})
\]
holds.
Now let $0\leq h\in C_{c}^{1}(\Sigma)$ and set $\hat{h}(x,t)=h(x)$. Then
\begin{align*}\label{WeightSobL1Proof}
\left[\int_{\Sigma}h^{\frac{m+1}{m}}e^{-f}d\mathrm{vol}_{\Sigma}\right]^{\frac{m}{m+1}} =&\left[\int_{\hat{\Sigma}}\hat{h}^{\frac{m+1}{m}}d\mathrm{vol}_{\hat{\Sigma}}\right]^{\frac{m}{m+1}}\\
\leq& C\int_{\hat{\Sigma}}|\nabla \hat{h}|d\mathrm{vol}_{\hat{\Sigma}}= C\int_{\Sigma}|\nabla h|e^{-f}d\mathrm{vol}_{\Sigma},
\end{align*}
proving \eqref{SobL1}.
\end{proof}

As a consequence of Lemma \ref{Thm_WeightSobL1}, applying \eqref{SobL1} to the function $h=u^{\frac{2m}{m-1}}$ and using H\"older's inequality, we deduce the validity of the following weighted $L^2$ Sobolev inequality
\begin{equation}\label{SobL2}
 \left[\int_{\Sigma}u^{\frac{2(m+1)}{m-1}}e^{-f}d\mathrm{vol}_{\Sigma}\right]^{\frac{m-1}{m+1}}\leq \left(\frac{2C m}{m-1}\right)^2\int_{\Sigma}|\nabla u|^2e^{-f}d\mathrm{vol}_{\Sigma}.
\end{equation}

Moreover, adapting to the weighted setting a well--known result due to H. D. Cao, Y. Shen, and S. Zhu, \cite{CSZ} (see also Lemma 7.13 in \cite{PRS-Book}, and \cite{BK} for a version of this result in the more general setting of metric measure spaces), we have that if a weighted manifold $M_f$ satisfies for some $0\leq\alpha<1$ the inequality
\begin{equation}\label{eq_WSobL2_alpha}
\left[\int_{M}h^{\frac{2}{1-\alpha}}e^{-f}d\mathrm{vol}_{M}\right]^{1-\alpha}
\leq S(\alpha)^2\int_{M}|\nabla h|^2e^{-f}d\mathrm{vol}_{M},
\end{equation}
for some positive constant $S(\alpha)$ and for every $h \in C_{c}^{\infty}\left(M\right)$,
then every end of $M$ either is non--$f$--parabolic or it has finite $f$--volume.
Note that, by \eqref{SobL2}, this result in particular applies to translators.
On the other hand, it is not difficult to prove that 
if on a weighted manifold $M_f$ \eqref{SobL1} holds, then every end of $M$ has infinite $f$--volume. We hence deduce the following

\begin{proposition}\label{propWSobL2}
Let $\Sigma^m\to \mathbb{R}^{m+1}$ be a translator. Then every end of $\Sigma$ is non--$f$--parabolic 
\end{proposition}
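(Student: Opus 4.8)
The plan is to deduce the statement directly from the two facts recalled immediately before it. First I would note that, by Lemma~\ref{Thm_WeightSobL1}, every translator $\Sigma$ carries the weighted $L^1$ Sobolev inequality \eqref{SobL1}, and hence also the weighted $L^2$ Sobolev inequality \eqref{SobL2}; feeding \eqref{SobL2} into the weighted version of the Cao--Shen--Zhu theorem gives the dichotomy that each end of $\Sigma$ is \emph{either} non--$f$--parabolic \emph{or} of finite $f$--volume. It then remains only to rule out the second alternative, i.e.\ to prove that the validity of \eqref{SobL1} forces every end of $\Sigma$ to have infinite $f$--volume; the proposition follows at once.

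For this I would first extract from \eqref{SobL1} a uniform Euclidean-type lower bound on weighted balls. After the routine step of extending \eqref{SobL1} to non-negative compactly supported Lipschitz test functions (by mollification on $\Sigma$), fix $p\in\Sigma$, put $r=\mathrm{dist}_{\Sigma}(p,\cdot)$ and $v(R)=\mathrm{vol}_{f}(B_{R}(p))$, and test \eqref{SobL1} against the Lipschitz function that equals $1$ on $B_{R_{1}}(p)$, vanishes outside $B_{R_{2}}(p)$, and interpolates linearly in $r$ on the annulus. This yields
\[
v(R_{1})^{\frac{m}{m+1}}\le\frac{C}{R_{2}-R_{1}}\bigl(v(R_{2})-v(R_{1})\bigr),
\]
and letting $R_{2}\downarrow R_{1}$ gives $v^{\frac{m}{m+1}}\le C\,v'$ at almost every radius, that is $\bigl(v^{1/(m+1)}\bigr)'\ge\frac{1}{C(m+1)}$ a.e.; integrating from $0$ (where $v=0$) produces
\[
\mathrm{vol}_{f}(B_{R}(p))\ \ge\ \Bigl(\tfrac{R}{C(m+1)}\Bigr)^{m+1}\qquad\text{for all }p\in\Sigma,\ R>0.
\]

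With this in hand I would conclude as follows. Let $E$ be an end of $\Sigma$ with respect to a smooth compact domain $\Omega$, and suppose for contradiction that $\mathrm{vol}_{f}(E)<+\infty$. Since $E$ is unbounded and $\Sigma$ is complete, for every $R>0$ there is a point $p\in E$ with $\mathrm{dist}_{\Sigma}(p,\Omega)>R$; then the connected ball $B_{R}(p)$ is disjoint from $\Omega$, meets $E$, and hence lies entirely in $E$, so that $\mathrm{vol}_{f}(E)\ge\mathrm{vol}_{f}(B_{R}(p))\ge\bigl(R/(C(m+1))\bigr)^{m+1}$. Letting $R\to+\infty$ contradicts the finiteness assumption, so $\mathrm{vol}_{f}(E)=+\infty$, and combined with the dichotomy above this proves the proposition.

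I do not expect a genuine obstacle here: once Lemma~\ref{Thm_WeightSobL1} is available the argument is entirely soft. The only points requiring a little care are the approximation of Lipschitz test functions by $C^{1}_{c}$ functions on a complete manifold, and the passage from the integrated inequality to the pointwise differential inequality for the monotone function $v$ (the standard coarea / almost-everywhere-differentiation argument), where one should be sure to track that the constant appearing is exactly the constant $C$ of \eqref{SobL1}.
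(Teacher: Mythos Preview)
Your proposal is correct and follows exactly the route sketched in the paper just before the proposition: the weighted $L^2$ Sobolev inequality \eqref{SobL2} plus the weighted Cao--Shen--Zhu dichotomy, together with the fact that \eqref{SobL1} forces every end to have infinite $f$--volume. You have simply supplied the details of this last ``not difficult'' step via the standard cutoff/volume-growth argument, which the paper omits.
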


However, note that the $(m+1)$--dimensional weighted $L^1$ Sobolev inequality obtained in Lemma \ref{Thm_WeightSobL1} does not allow to prove Theorem \ref{MainC} stated in the introduction. Indeed, in order to obtain an Anderson--type decay estimate we need the usual ($m$--dimensional) inequality. On the other hand, this latter can be obtained with the additional 
assumption that the translator is contained in a halfspace determined by an hyperplane orthogonal to the translating direction $v$. Namely, we have the validity of the following

\begin{lemma}\label{Thm_mWeightSobL1}
Let $x:\Sigma^m\to \mathbb{R}^{m+1}$ be a translator contained in the halfspace 
$\Pi_{v,a}=\{p\in\mathbb{R}^{m+1}: \langle p,v\rangle\geq a\}$, for some $a\in\mathbb{R}$.
Let $h$ be a non--negative compactly supported $C^1$ function on $\Sigma$. Then
\begin{equation}\label{mSobL1}
\left[\int_{\Sigma}h^{\frac{m}{m-1}}e^{-f}d\mathrm{vol}_{\Sigma}\right]^{\frac{m-1}{m}}
\leq D\int_{\Sigma}|\nabla h|e^{-f}d\mathrm{vol}_{\Sigma}.
\end{equation}
\end{lemma}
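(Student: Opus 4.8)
\textbf{Proof proposal for Lemma \ref{Thm_mWeightSobL1}.}

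The plan is to mimic the proof of Lemma \ref{Thm_WeightSobL1}, but instead of lifting to the $(m+2)$--dimensional warped product $\hat N=\mathbb{R}^{m+1}\times_{e^{-f}}\mathbb{T}$ and applying Hoffman--Spruck there, I would work directly on $\Sigma$ and exploit the halfspace hypothesis to convert the weighted $L^1$ inequality into an \emph{unweighted}, $m$--dimensional Euclidean--type Sobolev inequality. The key observation is that on $\Sigma$ one has $f=-\langle x,v\rangle\le -a$, so $e^{-f}=e^{\langle x,v\rangle}\ge e^{a}>0$ is bounded below by a positive constant; this is precisely what one needs to pass \emph{from} a weighted inequality \emph{to} a standard one, but here I want it the other way, so a more careful argument is required.

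The cleanest route is the following. First recall the Michael--Simon--Sobolev inequality on the immersed hypersurface $\Sigma^m\subset\mathbb{R}^{m+1}$: for $0\le h\in C^1_c(\Sigma)$,
\begin{equation*}
\left[\int_\Sigma h^{\frac{m}{m-1}}\,d\mathrm{vol}_\Sigma\right]^{\frac{m-1}{m}}\le c_m\int_\Sigma\bigl(|\nabla h|+|\mathbf{H}|h\bigr)\,d\mathrm{vol}_\Sigma .
\end{equation*}
Since $\Sigma$ is a translator, $|\mathbf{H}|=|v^\perp|\le 1$, so the mean--curvature term is controlled: $\int_\Sigma|\mathbf{H}|h\le\int_\Sigma h$. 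I would then apply this not to $h$ but to $\tilde h:=h\,e^{-f}$ (so that weights appear on the left), using $|\nabla\tilde h|\le e^{-f}|\nabla h|+e^{-f}|\nabla f|\,h$ and the fact that $|\nabla f|=|v^T|\le 1$. The resulting inequality reads, schematically,
\begin{equation*}
\left[\int_\Sigma h^{\frac{m}{m-1}}e^{-\frac{m}{m-1}f}\,d\mathrm{vol}_\Sigma\right]^{\frac{m-1}{m}}\le c_m\int_\Sigma e^{-f}\bigl(|\nabla h|+2h\bigr)\,d\mathrm{vol}_\Sigma .
\end{equation*}
On the left I would use $e^{-\frac{m}{m-1}f}=e^{-f}\cdot e^{-\frac{1}{m-1}f}\ge e^{-f}\cdot e^{-\frac{a}{m-1}}$ (wait — the sign: $e^{-f}=e^{\langle x,v\rangle}$, and $\langle x,v\rangle\ge a$, so in fact $e^{-\frac{1}{m-1}f}\ge e^{\frac{a}{m-1}}$ is bounded \emph{below} by the positive constant $e^{a/(m-1)}$), which gives the correct weighted measure $e^{-f}d\mathrm{vol}_\Sigma$ on the left up to a dimensional constant. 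This yields an inequality of the form $\|h\|_{L^{m/(m-1)}(\Sigma_f)}\le D'\bigl(\int_\Sigma|\nabla h|e^{-f}+\int_\Sigma h\,e^{-f}\bigr)$, i.e. a weighted Sobolev inequality with an extra lower--order $L^1$ term.

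The main obstacle is therefore disposing of the lower--order term $\int_\Sigma h\,e^{-f}d\mathrm{vol}_\Sigma$ on the right, since the clean statement \eqref{mSobL1} has only the gradient term. Here I would invoke the scaling/truncation trick of Hoffman--Spruck--Yau type: apply the inequality just derived to a rescaled immersion $\lambda\Sigma$ (which is again a translator after rescaling $v$, with mean curvature norm $\le\lambda^{-1}$ and with $|\nabla f_\lambda|\le\lambda^{-1}$), let $\lambda\to\infty$, and observe that the coefficient of the lower--order term carries a negative power of $\lambda$ while the gradient term is scale--invariant under the correct homogeneity; the weight $e^{-f}$ behaves consistently because rescaling $\Sigma$ by $\lambda$ and $v$ by $\lambda^{-1}$ keeps $f=-\langle x,v\rangle$ unchanged. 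Passing to the limit kills the $L^1$ term and leaves exactly \eqref{mSobL1}. Alternatively, and perhaps more in the spirit of the rest of the paper, one can absorb the lower--order term by noting that the halfspace condition forces $e^{-f}\ge e^a$, run the argument on small geodesic balls where the $L^1$ term is dominated by the $L^{m/(m-1)}$ term via Hölder, and then patch — but the rescaling argument is the decisive and cleanest step, and getting its bookkeeping of constants right (ensuring the surviving constant $D$ depends only on $m$, or on $m$ and $a$) is where the real care is needed.
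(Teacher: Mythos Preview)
Your route via Michael--Simon is genuinely different from the paper's. The paper never touches Michael--Simon; instead it makes the conformal change $\tilde g=e^{-2f/m}\langle\,,\,\rangle_{\mathbb{R}^{m+1}}$ on the ambient space, observes that $\Sigma$ becomes an honest \emph{minimal} hypersurface in the Cartan--Hadamard manifold $(\mathbb{R}^{m+1},\tilde g)$, and applies Hoffman--Spruck there. This produces the $m$--dimensional $L^1$ Sobolev inequality with \emph{no lower--order term at all}; the halfspace hypothesis enters only at the last line, to bound the conformal factor $e^{f/m}$ relating $\widetilde{|\tilde\nabla h|}$ to $|\nabla h|$.

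Your argument, by contrast, has a real gap at the ``decisive'' step: the rescaling does not kill the $L^1$ remainder. Under $x\mapsto\lambda x$, $v\mapsto\lambda^{-1}v$ (so $f$ is preserved) one has $d\mathrm{vol}_{\Sigma_\lambda}=\lambda^m d\mathrm{vol}_\Sigma$, $|H_\lambda|=\lambda^{-1}|H|$, \emph{and also} $|\nabla_{\Sigma_\lambda}(h_\lambda e^{-f_\lambda})|=\lambda^{-1}|\nabla(he^{-f})|$, since the induced metric is multiplied by $\lambda^2$. Hence both the gradient term and the zero--order term on the right of Michael--Simon pick up exactly the same factor $\lambda^{m-1}$ as the left side; dividing through, you recover for every $\lambda>0$ precisely the inequality you started from. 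This is just the scale--invariance of Michael--Simon: the classical $\mathbb{R}^n$ trick of scaling away an $L^1$ remainder relies on dilation--invariance of the \emph{domain}, which $\Sigma$ does not have, and your compensating rescaling of $v$ restores exact invariance rather than creating an asymmetry you can exploit. The ``patch from small balls'' alternative faces the usual obstruction that local Sobolev inequalities (obtained by absorbing the $L^1$ term via H\"older on sets of small $f$--volume) do not glue to a global one without a lower--order term. The conformal trick is exactly what circumvents all this: by making the hypersurface minimal one never incurs an $|H|h$ term in the first place.
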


\begin{proof}
Let $(M^{m+1}, g)$ be an $(m+1)$--dimensional
Riemannian manifold and $x:\Sigma^m\rightarrow M^{m+1}$ be an immersion. Given a smooth function $f\in C^{\infty}(M)$ we let
$\widetilde{g}=e^{−\frac{2f}{m}}g$ denote a conformal change of metric on $M$. Then $x$ may induce
two isometric immersions of $\Sigma$, that is  $(\Sigma,x^*g)\rightarrow (M, g)$ and 
$(\Sigma, x^*\widetilde{g})\rightarrow (M, \widetilde{g})$ respectively. Moreover
$(\Sigma, x^*\widetilde{g})$ is minimal in $(M, \widetilde{g})$ if and only if $(\Sigma,x^*g)$ is $f$--minimal in
$(M, g)$. This simply follows by the variational formula for the area functional, keeping in mind that 
$d\mathrm{vol}_{\widetilde{\Sigma}}=e^{-f}d\mathrm{vol}_{\Sigma}$.

Assume now that $x:\Sigma^m \to \mathbb{R}^{m+1}$ is a translator for the mean curvature flow and let 
$f(p)=-\left\langle p,v\right\rangle$. 
Denote by $\langle\ ,\ \rangle_{\mathbb{R}^{m+1}}$ the standard metric on $\mathbb{R}^{m+1}$ and consider on $\mathbb{R}^{m+1}$
the conformal metric $\widetilde{\langle\ ,\ \rangle}
:=e^{-\frac{2f}{m}}\langle\ ,\ \rangle_{\mathbb{R}^{m+1}}$. Then, the previous
reasoning implies that $(\Sigma, x^*(\langle\ ,\ \rangle_{\mathbb{R}^{m+1}}))$ is a translator in 
$(\mathbb{R}^{m+1}, \langle\ ,\ \rangle_{\mathbb{R}^{m+1}})$ if and only if 
$\widetilde{\Sigma}:=(\Sigma, x^*(\widetilde{\langle\ ,\ \rangle}))$
is a minimal hypersurface in $(\mathbb{R}^{m+1}, \widetilde{\langle\ ,\ \rangle})$. 

Using the expression for the curvature tensor of a Riemannian manifold under a conformal change 
(see e.g. Section 1.J in \cite{B}) it is not 
difficult to prove that the curvature tensor $\widetilde{R}$ of $(\mathbb{R}^{m+1}, \widetilde{\langle\ ,\ \rangle})$
satisfies
\[
\widetilde{R}_{ijij}=\frac{e^{-\frac{2f}{m}}}{m^2}(\langle\overline{\nabla}f, e_i\rangle^2+\langle\overline{\nabla}f, e_j\rangle^2-|\overline{\nabla} f|^2)\leq 0,
\]
where $\{e_j\}_{j=1}^{m+1}$ denotes the standard orthonormal basis in $\mathbb{R}^{m+1}$.

In particular $(\mathbb{R}^{m+1}, \widetilde{\langle\ ,\ \rangle})$ is a Cartan--Hadamard manifold and hence, by \cite{HoSp}, on the minimal hypersurface $\widetilde{\Sigma}$ the $L^1$--Sobolev inequality
\[
 \left(\int_{\Sigma}h^{\frac{m}{m-1}}d\mathrm{vol}_{\tilde{\Sigma}}\right)^{\frac{m-1}{m}}
 \leq C\int_{\Sigma}\widetilde{|\widetilde{\nabla} h|}d\mathrm{vol}_{\widetilde{\Sigma}}\quad\quad\forall\,0\leq h\in C_{c}^{1}(\Sigma)
\]
holds, where we denoted by $\widetilde{|\cdot|}$ and $\widetilde{\nabla}$ the norm and the Levi--Civita connection on $\widetilde{\Sigma}$ 
respectively. The conclusion is now straightforward keeping in mind that under a conformal change of the metric, the volume form and the norm of the gradient of a given function satisfy
\begin{align*}
d\mathrm{vol}_{\widetilde{\Sigma}}&=e^{-f}d\mathrm{vol}_{\Sigma}\\ 
\widetilde{|\widetilde{\nabla} h|}&=e^{\frac{f}{m}}|\nabla h|\leq e^{\frac{a}{m}}|\nabla h|.
\end{align*}
\end{proof}
 
\section{Topological consequences}\label{SectTopCons}

Recall that, in the non--weighted setting, there is a well known connection, developed mainly by Li and Tam, \cite{LT}, between the dimension of the space of $L^2$--harmonic forms, 
the number of non--parabolic ends and the Morse index of the Schr\"odinger operator
$L=-\Delta-a$, where the function $a$ is the smallest eigenvalue of the Ricci tensor.

In \cite{IR} it is shown that a similar connection remains valid also in the weighted setting. Indeed, adapting Li--Tam theory, it is not difficult to prove that given any relatively compact domain $D$ on a weighted manifold $M_f$, the number of non--$f$--parabolic ends of $M_f$ with respect to $D$ is bounded from above by the dimension of the space of bounded $f$--harmonic functions with finite Dirichlet weighted integral on $M_f$ (see Theorem 4 in \cite{IR}). In particular, suppose that $M_f$ has $N(D)>1$ non--$f$--parabolic ends $E_{A}$, $A=1,\ldots,N(D)$ (with respect to the relatively compact domain $D$). Then one proves that for every $A$ there exists a non--constant bounded $f$--harmonic function  $g_{A}:M_{f}\to\mathbb{R}$, with finite Dirichlet weighted integral, such that $\sup_{E_{A}}g_{A}=1$ and $\inf_{E_{B}}g_{A}=0$, for $B\neq A$. By construction, these functions turn out to be linearly independent. Here we recall, for the sake of completeness, that an end $E$ of $M_{f}$ with respect to a relatively compact domain $D\subset\subset M_f$ with smooth boundary is said to be $f$--parabolic if every positive $f$--superharmonic function satisfying $\frac{\partial u}{\partial n}\geq 0$ on $\partial E$, $n$ being the unit outward normal to $\partial E$, is constant. Otherwise the end will be called non--$f$--parabolic. 

Moreover, combining Theorem 4 in \cite{IR} with a weighted version of a result of D. Fischer--Colbrie, \cite{F}, and of an abstract finiteness result (see Theorem 3 in \cite{IR}) one can also prove the following
\begin{proposition}[Corollary 1 in \cite{IR}]\label{prop_coro1}
Let $M_f$ be a non--compact weighted manifold satisfying 
\[
\mathrm{Ric}_f\geq -a(x)
\]
for some $0\leq a\in C^0(M)$, and let $L_f=-\Delta_f-a(x)$. If $L_f$ has finite
Morse index, then $M_f$ has at most finitely many non--$f$--parabolic ends.
\end{proposition}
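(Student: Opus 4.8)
The plan is to prove Proposition~\ref{prop_coro1} by combining the two main ingredients announced in the paragraph preceding it: a weighted version of Fischer--Colbrie's result on the structure of operators of finite Morse index, and the weighted abstract finiteness theorem (Theorem~3 in \cite{IR}) together with the weighted Li--Tam estimate (Theorem~4 in \cite{IR}). First I would recall the weighted analogue of Fischer--Colbrie's theorem: if the Schr\"odinger-type operator $L_f=-\Delta_f-a(x)$ on $M_f$ has finite Morse index, then there is a relatively compact open set $K\subset\subset M$ such that $L_f$ is nonnegative on $M\setminus K$, i.e.\ there exists a positive function $\varphi$ on $M\setminus K$ satisfying $\Delta_f\varphi+a(x)\varphi\le 0$ there. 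The point is that outside a compact set the bottom of the spectrum of $L_f$ is nonnegative, which by the weighted Fischer--Colbrie--Schoen criterion (Proposition~3 in \cite{IR}) produces such a positive supersolution on the complement of $K$.

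Next I would feed this into the weighted abstract finiteness machinery. The hypothesis $\mathrm{Ric}_f\ge -a(x)$ means that on $M\setminus K$ we have a positive function $\varphi$ with $\Delta_f\varphi+a(x)\varphi\le 0$ and simultaneously $\mathrm{Ric}_f\ge -a(x)$; this is exactly the setup in which the weighted Bochner technique controls $f$-harmonic functions with finite weighted Dirichlet integral. Concretely, if $u$ is an $f$-harmonic function on an end $E\subset M\setminus K$ with $\int_E|\nabla u|^2 e^{-f}<\infty$, then a weighted Bochner--Weitzenb\"ock argument combined with the Kato inequality and the existence of $\varphi$ forces a rigidity/vanishing statement; this is precisely the content of the weighted abstract finiteness result, Theorem~3 in \cite{IR}. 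Thus on each non--$f$--parabolic end the space of such $f$-harmonic functions is severely constrained.

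Finally I would invoke the weighted Li--Tam correspondence (Theorem~4 in \cite{IR}): for a relatively compact domain $D$ with smooth boundary, the number of non--$f$--parabolic ends with respect to $D$ is bounded above by the dimension of the space of bounded $f$-harmonic functions on $M_f$ with finite weighted Dirichlet integral. Combining this with the finiteness coming from the previous step --- namely that finite Morse index of $L_f$ plus $\mathrm{Ric}_f\ge -a(x)$ bounds that dimension --- yields that $M_f$ has at most finitely many non--$f$--parabolic ends, which is the assertion. Since the bound on the number of ends with respect to $D$ is uniform in $D$ (the relevant function space does not depend on $D$), one concludes by letting $D$ exhaust $M$.

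The main obstacle, and the step requiring the most care, is the passage from ``finite Morse index'' to the existence of the positive supersolution $\varphi$ on the complement of a compact set, together with verifying that the weighted Bochner argument genuinely closes up under only an $L^2$ (Dirichlet) bound rather than a pointwise one --- in other words, that the cutoff-function integrations by parts are justified, which uses the weighted volume growth implicitly controlled by the hypotheses. Here, however, since Proposition~\ref{prop_coro1} is quoted verbatim as Corollary~1 of \cite{IR}, the honest proof is simply to cite that reference; the sketch above records the structure of the argument for the reader's benefit but the technical heart is delegated to \cite{IR}.
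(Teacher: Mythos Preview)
Your proposal is correct and mirrors exactly the paper's own treatment: the paper does not give a proof of Proposition~\ref{prop_coro1} but simply cites it as Corollary~1 in \cite{IR}, indicating in the preceding paragraph that it follows by combining Theorem~4 in \cite{IR} (weighted Li--Tam), a weighted Fischer--Colbrie result, and Theorem~3 in \cite{IR} (the abstract finiteness result). Your sketch reproduces precisely this chain of ingredients and correctly notes that the technical content is delegated to \cite{IR}.
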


We are now in position to prove Theorem \ref{MainD} and Theorem \ref{MainB}, stated in Section \ref{intro}.
\begin{proof}(of Theorem \ref{MainD}). Let $x:\Sigma^m\to\mathbb{R}^{m+1}$, $m\geq 2$, be a $f$--stable translator. Note first that, as a consequence of Proposition \ref{propWSobL2}, every end of $\Sigma$ is non--$f$--parabolic. We reason by contradiction and assume that $\Sigma$ has at least two non--$f$--parabolic ends.
Then, as observed above, there exists a non--constant bounded $f$--harmonic function $u$ such that $|\nabla u|\in L^2(\Sigma_f)$.
Recall the well-known Bochner formula for $f$--harmonic functions
\begin{equation}\label{fBochner}
\frac{1}{2}\Delta_{f}|\nabla u|^2=|\mathrm{Hess}(u)|^2+\mathrm{Ric}_{f}(\nabla u, \nabla u).
\end{equation}
Using \eqref{fBochner}, \eqref{CurvBnd} and the Kato inequality, it is not difficult to prove that $|\nabla u|$ satisfies \eqref{ineq_u}.

Moreover, since $\Sigma$ is $f$--stable, there exist a positive function $v$ satisfying \eqref{ineq_deltav} with $\delta=1$ and $a(x)=|\mathbf{A}|^2(x)$. 
Applying Lemma \ref{Liouville}, we deduce that either $|\nabla u|\equiv 0$, contradicting the fact that $u$ is not constant, or $|\nabla u|=C v$, for some positive constant $C$. Furthermore, in this latter case, inequalities \eqref{ineq_u} and \eqref{ineq_deltav} hold with the equality sign. In particular this implies that $$\mathrm{Ric}_{f}(\nabla u, \nabla u)=-|\mathbf{A}|^2|\nabla u|^2.$$

Fix an arbitrary point $p\in \Sigma$ and let $\{E_{i}(p)\}_{i=1}^{m}$ be an orthonormal basis for $T_{p}\Sigma$ such that $E_{1}(p)=\frac{\nabla u}{|\nabla u|}(p)$. Hence at $p$, 
\[
\ \mathrm{tr}(\mathrm{Ric}_{f})=-|\mathbf{A}|^2-\sum_{i=2}^{m}|\mathbf{A}E_{i}|^2.
\]
On the  other hand, combining Gauss' equation with \eqref{Eqf}, we also have
\[
\ \mathrm{tr}(\mathrm{Ric}_{f})=-|\mathbf{A}|^2.
\]
Hence we obtain
\[
\ \sum_{i=2}^{m}|\mathbf{A}E_{i}|^2=0,
\]
from which we deduce that $|\mathbf{A}|^2(p)=H^2(p)$. Since $p$ is arbitrary this implies that $|\mathbf{A}|^2=H^2$ on $\Sigma$. In particular, as a consequence of Gauss' equation, the scalar curvature of $\Sigma$ vanishes identically. Using Corollary 2.4 in \cite{MHSS} we conclude that $\Sigma$ has to be either a grim reaper cylinder or a translator hyperplane. However, this contradicts the assumption that $\Sigma$ has at least two (non--$f$--parabolic) ends.
\end{proof}

\begin{proof}(of Theorem \ref{MainB}).
Recall that, given a translator
$x:\Sigma^m\rightarrow \mathbb{R}^{m+1}$, the Bakry--\'Emery Ricci tensor of $\Sigma$ satisfies the curvature condition \eqref{CurvBnd}. 
Moreover, if $Ind_{f}(\Sigma)<+\infty$, the assumptions of 
Proposition \ref{prop_coro1} are met with the choice $a(x)=|A|^2(x)$
and we can conclude that $\Sigma$ has at most finitely many non--$f$--parabolic ends. 
The conclusion of the theorem is then straightforward since,  
according to Proposition \ref{propWSobL2},
every end of a translator is non--$f$--parabolic. 
\end{proof}

Finally, the third result we aim at proving in this section, namely Theorem \ref{MainC}, 
deals with the connection,
for a translator $\Sigma$ contained in a halfspace $\Pi_{v,\alpha}$,
between the $L^m$--integrability of its second fundamental form and the 
property of having finite topological type, i.e., the existence of a smooth
compact subset $\Omega\subset\subset\Sigma$ such that $\Sigma\backslash\Omega$
is diffeomorphic to the half--cylinder $\partial\Omega\times\lbrack0,+\infty)$.
The proof of this theorem relies on the following general
result that, in the setting of minimal submanifolds of the Euclidean space,
is due to M. Anderson, \cite{A}.
\begin{lemma}\label{lemma_finitetoptype}
Let $x:\Sigma^{m}\rightarrow\mathbb{R}^{m+1}$ be a
complete, non-compact hypersurface satisfying
\begin{equation} \label{unifest}
\sup_{\Sigma\backslash B_{R}^{\Sigma}\left(  o\right)  }\left\vert
\mathbf{A}\right\vert =o\left( R^{-1}\right)  \text{, as }%
R\rightarrow+\infty.
\end{equation}
Then $x$ is proper and $\Sigma$ has finite topological type.
\end{lemma}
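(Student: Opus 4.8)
The plan is to reduce both conclusions — properness and finite topological type — to elementary properties of the function $|x|^2$ on $\Sigma$, using only the decay hypothesis \eqref{unifest} together with $\overline{\nabla}_Y x=Y$ and the Gauss formula; this is essentially the argument of M. Anderson \cite{A} specialized to hypersurfaces and stripped of the minimality hypothesis.

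First I would prove properness. Along a minimizing unit--speed geodesic $\gamma:[0,L]\to\Sigma$ issuing from $o$ one has $\frac{d^2}{ds^2}(x\circ\gamma)=\mathbf{A}(\dot\gamma,\dot\gamma)$, so for $\phi(s):=|x(\gamma(s))-x(o)|^2$ it holds that $\phi''(s)\ge 2-2|\mathbf{A}|(\gamma(s))\sqrt{\phi(s)}\ge 2-2\,\psi(s/2)\,s$, where $\psi(R):=\sup_{\Sigma\setminus B_R^\Sigma(o)}|\mathbf{A}|$; here I used $\sqrt{\phi(s)}\le s$ and $r(\gamma(s))=s$. Since \eqref{unifest} says exactly $R\psi(R)\to0$, there is $s_0$ with $\phi''\ge1$ on $[s_0,+\infty)$; integrating twice and bounding $|\phi'(s_0)|\le2\sqrt{\phi(s_0)}\le 2s_0$ gives $\phi(s)\ge\frac12(s-3s_0)^2$ for $s$ large, i.e. $|x(p)-x(o)|\ge\frac12\,r(p)$ for $r(p)$ large. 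Hence preimages of bounded sets are bounded, so by completeness $x$ is proper; moreover $r(p)$ and $|x(p)|$ are comparable at infinity, so \eqref{unifest} upgrades to the extrinsic decay $|\mathbf{A}|(p)\,|x(p)|\to0$ as $|x(p)|\to+\infty$.

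Next I would analyse the critical points of $|x|^2$. The standard identity $\mathrm{Hess}\,|x|^2(Y,Y)=2|Y|^2+2\langle x,\nu\rangle\langle AY,Y\rangle$, valid for any hypersurface, shows that at a critical point $p$ of $|x|^2$ one has $x(p)=\langle x(p),\nu\rangle\nu$, whence $|\langle x(p),\nu\rangle|=|x(p)|$ and $\mathrm{Hess}\,|x|^2(p)\ge 2\bigl(1-|x(p)|\,|\mathbf{A}|(p)\bigr)\,g$. By the extrinsic decay there is $R_1>|x(o)|$, which we may take to be a regular value, such that every critical point of $|x|^2$ in $\{|x|>R_1\}$ has positive definite Hessian, hence is a strict local minimum. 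By properness $\{|x|^2=R_1^2\}$ is a compact hypersurface, so $\{|x|>R_1\}$ has only finitely many connected components $E_1,\dots,E_k$, each with compact boundary. On each $E_j$ a mountain--pass argument — applied between a hypothetical interior strict local minimum and the boundary level $\{|x|^2=R_1^2\}$, the functional $|x|^2$ being proper and hence automatically providing the needed Palais--Smale compactness — would produce a critical point that is not a local minimum, which is impossible. Hence $|x|^2$ has no critical points in $\{|x|>R_1\}$ at all, and the gradient flow of $|x|^2$ (complete outward, since $|x|^2$ is proper and unbounded on each $E_j$) identifies $\{|x|\ge R_1\}$ with $\{|x|=R_1\}\times[0,+\infty)$. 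Taking $\Omega=\{|x|^2\le R_1^2\}$ gives finite topological type.

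The step I would be most careful about is the last one: ruling out a strict local minimum of the proper exhaustion $|x|^2$ that is ``isolated at infinity'' inside an end region $E_j$. One must make sure the minimax/mountain--pass principle is applied correctly in this non--compact, manifold--with--boundary setting — properness of $|x|^2$ is exactly what supplies the required compactness — and that the resulting critical point indeed lies in $\{|x|>R_1\}$ and fails to be a local minimum. Everything else (the ODE comparison for $\phi$, the Hessian identity, the gradient--flow deformation) is routine.
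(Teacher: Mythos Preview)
The paper does not supply a proof of this lemma at all: it is stated and attributed to Anderson~\cite{A}, and then used as a black box in the proof of Theorem~\ref{MainC}. So there is no ``paper's own proof'' to compare against; what you have written is essentially Anderson's argument, stripped of the minimality hypothesis (which is indeed irrelevant once one has the decay~\eqref{unifest}).

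Your argument is correct. Two small remarks. First, the double integration of $\phi''\ge 1$ from $s_0$ gives $\phi(s)\ge \tfrac12(s-3s_0)^2-2s_0^2$ rather than $\tfrac12(s-3s_0)^2$; this is of course harmless for the conclusion $|x(p)-x(o)|\ge c\,r(p)$ at infinity. Second, the mountain--pass step is fine as you outline it: properness of $|x|^2$ yields Palais--Smale, the minimax level over paths joining the hypothetical strict minimum $q$ to a boundary point satisfies $c>|x(q)|^2>R_1^2$, so the critical point produced lies in $\{|x|>R_1\}$, and a critical point obtained at a genuine mountain--pass level cannot have positive--definite Hessian --- contradicting the Hessian estimate. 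The gradient--flow collar then gives the product structure exactly as you say.
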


\begin{proof}(of Theorem \ref{MainC}). 
Assume that $x:\Sigma^m\rightarrow\mathbb{R}^{m+1}$, $m\geq 3$, is a complete translator contained in a halfspace
$\Pi_{v, a}$, for some $a\in\mathbb{R}$. Then, applying \eqref{mSobL1} to $h=u^{\frac{2(m-1)}{m-2}}$, we get that on $\Sigma$ the weighted 
$L^{2}$-Sobolev inequality%
\[
\left[\int_{\Sigma}u^{\frac{2m}{m-2}}e^{-f}d\mathrm{vol}_{\Sigma}\right]^{\frac{m-2}{m}}
\leq C^2\int_{\Sigma}|\nabla u|^2e^{-f}d\mathrm{vol}_{\Sigma}
\]
holds, for some constant $C>0$ and for every $u \in C_{c}^{\infty}\left(
\Sigma\right)$. 
Now we recall that, using \eqref{EqSimon} and the Kato inequality, the second 
fundamental form of $\Sigma$ satisfies
the Simons--type inequality%
\[
\Delta_{f}\left\vert \mathbf{A}\right\vert +\left\vert \mathbf{A}\right\vert
^{3}\geq0.
\]
Since $|\mathbf{A}| \in L^{m}(\Sigma_f)$, adapting to the weighted setting the arguments in \cite{PV2},  
we are able to obtain the Anderson--type decay estimate \eqref{unifest}.
The desired conclusion is now a direct consequence of Lemma \ref{lemma_finitetoptype}.
\end{proof}

\section{About boundedness properties of translators}\label{SectBndProp}
A simple application of the maximum principle to \eqref{Eqf} gives that $f$ cannot have a local minimum on $\Sigma$. Thus, in particular, there are no compact translators. 

In the non--compact setting, reasoning in a similar way, one can make use of a version of the maximum principle at infinity, known as the Omori--Yau maximum principle for $\Delta_f$, to obtain that a suitable class of translators cannot be bounded in $\mathbb{R}^{m+1}$. The validity of the Omori--Yau maximum principle for $\Delta_f$, is known to be equivalent to the $f$--stochastic completeness of the hypersurface, namely the Markovianity of the natural diffusion process associated with the weighted Laplacian, \cite{PRS-Memoirs}, \cite{PRiS}.

\begin{proposition}\label{prop_nofstoch}
Let $x:\Sigma^m\to\mathbb{R}^{m+1}$ be an $f$--stochastically complete translator. Then $x(\Sigma)$ cannot be contained in a lower halfspace $\Pi^{-}_{v,b}:=\left\{p\in\mathbb{R}^{m+1} \, :\,\left\langle p, v\right\rangle\leq b\right\}$. In particular $x(\Sigma)$ cannot be bounded. 
\end{proposition}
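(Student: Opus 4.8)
The plan is to run a maximum-principle-at-infinity argument on the function $f=-\langle x,v\rangle$, exactly parallel to the compact case treated just above (where the ordinary maximum principle applied to $\Delta_f f=-1$ forbids a local minimum of $f$). First I would recall that, by hypothesis, $\Sigma$ being $f$--stochastically complete is equivalent to the validity of the Omori--Yau maximum principle for $\Delta_f$: for every $u\in C^2(\Sigma)$ with $u^*:=\sup_\Sigma u<+\infty$ there is a sequence $\{p_k\}\subset\Sigma$ with $u(p_k)\to u^*$ and $\Delta_f u(p_k)\le 1/k$ (it suffices here to use the weak form, without the gradient condition). This is the replacement for the finite-dimensional maximum principle and is the only nontrivial input.

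Next I would argue by contradiction: suppose $x(\Sigma)\subset\Pi^-_{v,b}=\{p:\langle p,v\rangle\le b\}$. Then $f=-\langle x,v\rangle\ge -b$ on $\Sigma$, so $u:=-f=\langle x,v\rangle$ is bounded above by $b$, hence $u^*\le b<+\infty$. By \eqref{Eqf} we have $\Delta_f u=-\Delta_f f=1$ everywhere on $\Sigma$. Applying the weak Omori--Yau maximum principle to $u$ produces points $p_k$ with $\Delta_f u(p_k)\le 1/k$; but $\Delta_f u\equiv 1$, giving $1\le 1/k$ for all $k$, a contradiction. Therefore $x(\Sigma)$ cannot lie in any lower halfspace determined by $v$; in particular, since a bounded set is contained in such a halfspace, $x(\Sigma)$ is unbounded.

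There is essentially no computational obstacle here: the identity $\Delta_f f=-1$ is already established in \eqref{Eqf}, and the equivalence between $f$--stochastic completeness and the Omori--Yau maximum principle for $\Delta_f$ is quoted from \cite{PRS-Memoirs}, \cite{PRiS}. The only point that deserves care is to make sure the function to which the maximum principle is applied is bounded \emph{above} (so one uses the $\sup$-version of the principle) and that the sign in $\Delta_f u\equiv 1$ is the one that conflicts with the inequality $\Delta_f u(p_k)\le 1/k$; with the conventions fixed in Section \ref{SectBasicEq} this works out directly. Thus the ``hard part'' is merely invoking the right form of the statement, and the proof is a one-line contradiction once that is in place.
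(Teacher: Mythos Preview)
Your proof is correct and essentially identical to the paper's: both argue by contradiction from the boundedness of $f$ (equivalently $-f$) under the halfspace assumption and invoke the weak Omori--Yau maximum principle for $\Delta_f$ together with \eqref{Eqf} to obtain the contradiction $1\le 1/k$. The only cosmetic difference is that the paper applies the $\inf$--version directly to $f$ (bounded below), while you apply the $\sup$--version to $u=-f$; these are of course the same statement up to a sign.
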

\begin{proof}
Since $x(\Sigma)$ is $f$--stochastically complete, if $u\in C^2(\Sigma)$ is bounded from below we can find a sequence of points $\{p_j\}\subset\Sigma$ such that
\begin{equation*}
\begin{array}{cc}
(i)\,u(p_j)=\inf_{\Sigma}u+\frac{1}{j}&(ii)\,\Delta_f u(p_j)>-\frac{1}{j}.
\end{array}
\end{equation*}
Assume by contradiction that $x(\Sigma)$ is contained in a lower halfspace $\Pi^{-}_{v,b}$. Then
\[
\ \inf_{\Sigma}f>-\infty,
\]
and we can find a sequence $\{p_j\}\subset\Sigma$ such that
\begin{equation*}
\begin{array}{cc}
(i)\,f(p_j)=\inf_{\Sigma}f+\frac{1}{j}&(ii)\,-1=\Delta_f f(p_j)>-\frac{1}{j}.
\end{array}
\end{equation*}
Passing to the limit as $j\to\infty$ in (\textit{ii}) we reach the desired contradiction.
\end{proof}

\begin{remark}
\rm{It is known that $f$--stochastic completeness is guaranteed if the Bakry--\'Emery Ricci curvature is uniformly bounded from below; see \cite{Q1}, \cite{WW}, \cite{PRiS}. In particular, by \eqref{CurvBnd}, a translator $x:\Sigma^m\to\mathbb{R}^{m+1}$ 
is $f$--stochastically complete if $|\mathbf{A}|\in L^{\infty}(\Sigma)$. 

By the way, note that also properly immersed translators $x:\Sigma^m\to\mathbb{R}^{m+1}$ are $f$-stochastically complete. Indeed, by \eqref{Eqx2},  we get that outside a compact set
\[
\ \Delta_f|x|^2\leq2(m+1)|x|^2.
\]
Thus $|x|^2$ is a proper $\lambda$--f--superharmonic function, with $\lambda=2(m+1)>0$. The $f$--stochastic completeness follows then by the standard Khas'minskii test.}
\end{remark}

Assuming, instead of an $L^{\infty}$ condition on $|\mathbf{A}|$, the finiteness of the total curvature, we are still able to deduce a non--existence result for bounded translators.

\begin{proposition}
Let $x:\Sigma^m\to\mathbb{R}^{m+1}$ be a translator satisfying $|\mathbf{A}|\in L^{m}(\Sigma)$. Then $x(\Sigma)$ cannot be contained in a $v$-slab $\left\{p\in\mathbb{R}^{m+1} \, :\,a\leq\left\langle p, v\right\rangle\leq b\right\}$. In particular, $x(\Sigma)$ cannot be bounded. 
\end{proposition}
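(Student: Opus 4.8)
The statement claims that a translator $x:\Sigma^m\to\mathbb{R}^{m+1}$ with $|\mathbf{A}|\in L^m(\Sigma)$ cannot be contained in a $v$-slab $\{a\le\langle p,v\rangle\le b\}$. The natural strategy is to combine two facts already available in the paper. First, if $\Sigma$ were contained in such a slab, it would in particular be contained in the halfspace $\Pi_{v,a}$, so by Lemma \ref{Thm_mWeightSobL1} the standard ($m$-dimensional) weighted $L^1$ Sobolev inequality \eqref{mSobL1} holds on $\Sigma$. Second, finiteness of the total curvature $|\mathbf{A}|\in L^m(\Sigma_f)$ together with this Sobolev inequality gives, via the Simons-type inequality $\Delta_f|\mathbf{A}|+|\mathbf{A}|^3\ge0$ and the Anderson-type argument used in the proof of Theorem \ref{MainC}, the decay estimate \eqref{unifest}, namely $\sup_{\Sigma\setminus B_R}|\mathbf{A}|=o(R^{-1})$. (One should note here that $L^m(\Sigma)$ and $L^m(\Sigma_f)$ are comparable on a slab, since $a\le -f\le b$ forces $e^{-b}\le e^{-f}\le e^{-a}$, so the two integrability conditions are equivalent in this setting.) By Lemma \ref{lemma_finitetoptype} this already forces $x$ to be proper and $\Sigma$ to have finite topological type.

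The heart of the argument is then to show that a \emph{bounded} (more precisely, slab-confined) translator with such curvature decay is impossible. The idea is to feed properness back into the boundedness obstruction of Section \ref{SectBndProp}. Indeed, once we know $x$ is a proper immersion, the final remark of Section \ref{SectBndProp} tells us that $\Sigma$ is $f$-stochastically complete: outside a compact set, \eqref{Eqx2} gives $\Delta_f|x|^2\le 2(m+1)|x|^2$, so $|x|^2$ is a proper $\lambda$-$f$-superharmonic function with $\lambda>0$, and the Khas'minskii test applies. Having $f$-stochastic completeness, we are now exactly in the situation of Proposition \ref{prop_nofstoch}: a $v$-slab is contained in the lower halfspace $\Pi^-_{v,b}$, on which $\inf_\Sigma f>-\infty$, and applying the Omori--Yau maximum principle for $\Delta_f$ to the bounded-below function $f$ itself yields a sequence $\{p_j\}$ with $\Delta_f f(p_j)>-1/j$; but $\Delta_f f\equiv -1$ by \eqref{Eqf}, a contradiction. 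This completes the proof.

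Let me outline the steps in order. First, assume for contradiction that $x(\Sigma)\subset\{a\le\langle p,v\rangle\le b\}$. Second, invoke Lemma \ref{Thm_mWeightSobL1} to get the $m$-dimensional weighted Sobolev inequality, and derive from it the weighted $L^{2}$-Sobolev inequality as in the proof of Theorem \ref{MainC}. Third, using $|\mathbf{A}|\in L^m(\Sigma_f)$ (equivalent to $|\mathbf{A}|\in L^m(\Sigma)$ on a slab) and the Simons-type inequality, run the weighted Anderson/Pigola--Veronelli argument to obtain \eqref{unifest}; by Lemma \ref{lemma_finitetoptype}, $x$ is proper. Fourth, from properness deduce $f$-stochastic completeness via the Khas'minskii test applied to $|x|^2$, using \eqref{Eqx2}. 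Fifth, apply Proposition \ref{prop_nofstoch}: the slab lies in a lower halfspace $\Pi^-_{v,b}$, which contradicts $f$-stochastic completeness. Hence no such slab can contain $x(\Sigma)$, and in particular $x(\Sigma)$ is unbounded.

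The main obstacle, and the only genuinely non-routine point, is the third step: establishing that the weighted Sobolev inequality plus $L^m$-integrability of $|\mathbf{A}|$ yields the pointwise decay $\sup_{\Sigma\setminus B_R}|\mathbf{A}|=o(R^{-1})$. This is the weighted analogue of Anderson's classical total-curvature argument and of the refinement in \cite{PV2}; it requires a Moser-iteration estimate for the subsolution $|\mathbf{A}|$ of the Simons-type inequality, controlled by the Sobolev constant, together with the tail smallness $\int_{\Sigma\setminus B_R}|\mathbf{A}|^m e^{-f}\,d\mathrm{vol}_\Sigma\to0$. The same step is already used (and only sketched) in the proof of Theorem \ref{MainC}, so here it may simply be cited. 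Once that decay is in hand, everything else is a direct chaining of the already-proven Lemma \ref{lemma_finitetoptype}, Proposition \ref{prop_nofstoch}, and the stochastic-completeness remark.
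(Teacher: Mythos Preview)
Your proof is correct and follows essentially the same line as the paper's: slab $\Rightarrow$ weighted $m$-dimensional Sobolev inequality $\Rightarrow$ Anderson-type decay for $|\mathbf{A}|$ $\Rightarrow$ properness via Lemma \ref{lemma_finitetoptype} $\Rightarrow$ $f$-stochastic completeness via the Khas'minskii test $\Rightarrow$ contradiction with Proposition \ref{prop_nofstoch}. The only difference is in how the Sobolev inequality is obtained: you invoke Lemma \ref{Thm_mWeightSobL1} directly (using only the halfspace containment), whereas the paper instead exploits the two-sided bound $e^{-b}\le e^{-f}\le e^{-a}$ on the slab to pass from the Michael--Simon inequality to a weighted $L^1$ Sobolev inequality with an extra $|H|h$ term, and then absorbs that term using $H\in L^m(\Sigma_f)$; both routes are valid and yield the same weighted $L^2$ Sobolev inequality needed for the Moser iteration.
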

\begin{proof}
Assume by contradiction that $x(\Sigma)$ is contained in a $v$-slab. Then, $f$ is bounded and there exist
positive constants $C_1,\ C_2$ such that
\begin{equation}\label{EqMeas}
C_1 d\mathrm{vol}_{\Sigma}\leq  e^{-f}d\mathrm{vol}_{\Sigma}\leq C_2 d\mathrm{vol}_{\Sigma}.
\end{equation}
Hence, combining \eqref{EqMeas} with Theorem 2.1 in \cite{MiSi}, we obtain that on $\Sigma$ the weighted
$L^{1}$-Sobolev inequality
\[
\left[\int_{\Sigma}h^{\frac{m}{m-1}}e^{-f}d\mathrm{vol}_{\Sigma}\right]^{\frac{m-1}{m}}
\leq C\left[\int_{\Sigma}\left(|\nabla h|+|H|h\right)e^{-f}d\mathrm{vol}_{\Sigma}\right]
\]
holds, for some constant $C>0$ and for every $0\leq h \in C_{c}^{1}\left(
\Sigma\right)$. Moreover, the integrability assumption on the second 
fundamental form implies that $H\in L^m(\Sigma_f)$ and, using H\"older's inequality, it is not 
difficult to prove that on $\Sigma$ the $L^2$ weighted Sobolev inequality
\[
\left[\int_{\Sigma}h^{\frac{2m}{m-2}}e^{-f}d\mathrm{vol}_{\Sigma}\right]^{\frac{m-2}{m}}
\leq D^2\left[\int_{\Sigma}|\nabla h|^2e^{-f}d\mathrm{vol}_{\Sigma}\right]
\]
holds, for some constant $D>0$ and for every $h \in C_{c}^{1}\left(
\Sigma\right)$.
Reasoning as in the proof of Theorem \ref{MainC} we obtain again 
the Anderson--type decay estimate \eqref{unifest} for $|\mathbf{A}|$ and hence 
the properness of $x$. But then $\Sigma$ has to be $f$--stochastically complete, 
contradicting Proposition \ref{prop_nofstoch}. 
\end{proof}

\begin{acknowledgement*}
The authors have been supported by the ``Gruppo Nazionale per l'Analisi Matematica, la Probabilit\`a e le loro Applicazioni'' (GNAMPA) of the Instituto Nazionale di Alta Matematica (INdAM). We would like to thank the anonymous referee for useful comments.
\end{acknowledgement*}

\bibliographystyle{amsplain}
\bibliography{TranslSol}
 
\end{document}